\documentclass[a4paper, 11pt, reqno]{amsart}

\usepackage{amssymb, amsmath, latexsym, pstricks}

\newcommand{\ifpics}[1]{#1}

\newcommand\GreenL{\mathcal{L}}
\newcommand\GreenR{\mathcal{R}}
\newcommand\GreenH{\mathcal{H}}
\newcommand\GreenD{\mathcal{D}}
\newcommand\GreenJ{\mathcal{J}}
\newcommand\RelL{\mathrel{\mathcal{L}}}
\newcommand\RelR{\mathrel{\mathcal{R}}}
\newcommand\RelH{\mathrel{\mathcal{H}}}
\newcommand\RelD{\mathrel{\mathcal{D}}}
\newcommand\RelJ{\mathrel{\mathcal{J}}}
\newcommand\OrdL{\mathrel{\leq_\mathcal{L}}}
\newcommand\OrdR{\mathrel{\leq_\mathcal{R}}}
\newcommand\OrdJ{\mathrel{\leq_\mathcal{J}}}

\newcommand\trop{\mathbb{T}}
\newcommand\olt{\overline{\mathbb{T}}}
\newcommand\ft{\mathbb{FT}}

\newcommand\pft{\mathbb{PFT}}

\newtheorem{theorem}{Theorem}[section]

\newtheorem{lemma}[theorem]{Lemma}
\newtheorem{proposition}[theorem]{Proposition}

\newtheorem{corollary}[theorem]{Corollary}
\theoremstyle{definition}\newtheorem{example}[theorem]{Example}

\begin{document}

\title[Green's $\mathcal{J}$-order and the rank of tropical matrices]{Green's $\mathcal{J}$-order and \\ the rank of tropical matrices}

\subjclass[2000]{20M10; 14T05, 52B20}

\maketitle

\begin{center}

    MARIANNE JOHNSON\footnote{Email \texttt{Marianne.Johnson@maths.manchester.ac.uk}.}
and
 MARK KAMBITES\footnote{Email \texttt{Mark.Kambites@manchester.ac.uk}.}

    \medskip

    School of Mathematics, \ University of Manchester, \\
    Manchester M13 9PL, \ England.

\date{\today}
\keywords{}
\thanks{}

\end{center}

\date{\today}
\numberwithin{equation}{section}

\begin{abstract}
We study Green's $\GreenJ$-order and $\GreenJ$-equivalence for the semigroup of
all $n \times n$ matrices over the tropical semiring. We give an exact
characterisation of the $\GreenJ$-order, in terms of morphisms between
certain tropical
convex sets. We establish connections between the $\GreenJ$-order,
isometries of tropical convex sets, and various notions of rank for tropical
matrices. We also study the relationship between the relations $\GreenJ$
and $\GreenD$; Izhakian and Margolis have observed that $\GreenD \neq \GreenJ$
for the semigroup of all $3 \times 3$ matrices over the tropical semiring with
$-\infty$, but in contrast, we show that, $\GreenD = \GreenJ$ for all full matrix
semigroups over the finitary tropical semiring.
\end{abstract}

\section{Introduction}

Tropical algebra (also known as max-plus algebra or max algebra) is the
algebra of the real numbers (typically augmented with $-\infty$, and
sometimes also with $+\infty$) under the operations of addition and maximum.
It has been an
active area of study in its own right since the 1970's
\cite{CuninghameGreen79} and also has applications in diverse
areas such as analysis of discrete event
systems \cite{MaxPlus95}, combinatorial optimisation and
scheduling problems \cite{Butkovic03}, formal languages
and automata \cite{Pin98, Simon94},
 control theory \cite{Cohen99}, phylogenetics \cite{Eriksson05}, statistical
inference \cite{Pachter04}, biology \cite{Brackley11}, algebraic geometry \cite{Bergman71, Mikhalkin05, RichterGebert05} and
combinatorial/geometric group theory \cite{Bieri84}. Tropical algebra
and many of its basic properties have been independently
rediscovered many times by researchers in these fields.

Many problems arising from these application areas are naturally expressed
using (max-plus) linear equations, so much of tropical algebra concerns
matrices. From an algebraic perspective, a key object is the semigroup
of all square matrices of a given size over the tropical semiring. This
semigroup obviously plays a role analogous to that of the full matrix
semigroup over a field. Perhaps less obviously, the relative scarcity of
invertible matrices over an idempotent semifield means that much less is to be learnt
here by studying only invertible matrices. Many classical matrix problems
can be reduced to questions about invertible matrices, and hence about the
general linear group. In tropical algebra, however, there is typically no
such reduction, so problems often entail a detailed analysis of non-invertible matrices. In
this respect, the full matrix semigroup takes on the mantle of the
general linear group. Its algebraic structure has thus
been the subject of considerable study, at first on an \textit{ad hoc}
basis, but more recently moving towards a systematic understanding using
the tools of semigroup theory (see for example
\cite{dAlessandro04,K_tropd,K_puredim,Izhakian10,K_tropicalgreen,Simon94}).

Green's relations \cite{Green51,Clifford61} are five equivalence relations
($\GreenL$, $\GreenR$, $\GreenH$, $\GreenD$ and $\GreenJ$)
and three pre-orders ($\leq_\GreenL$, $\leq_\GreenR$ and $\leq_\GreenJ$)
which can be defined upon any semigroup, and which encapsulate the structure
of its maximal subgroups and principal left, right and two-sided ideals.
They are
powerful tools for understanding semigroups and monoids, and play a key role
in almost every aspect of
modern semigroup theory.
The relations $\leq_\GreenL$, $\leq_\GreenR$, $\GreenL$, $\GreenR$
and $\GreenH$ can be described in generality for the full matrix
semigroup over a semiring with identity, and hence present no particular
challenge in the tropical case; see \cite{K_tropicalgreen} or
Section~\ref{sec_greens} below for details. In \cite{K_tropicalgreen},
we initiated the study of Green's relations in tropical matrix semigroups,
by describing the remaining relations in the case of the $2 \times 2$
tropical matrix semigroup.
In \cite{K_tropd}, Hollings and the second author gave a complete description
of the $\GreenD$-relation in arbitrary finite dimensions, based on some deep
connections with the phenomenon of \textit{duality} between the row and
column space of a tropical matrix.

In the present paper, we turn our attention to the equivalence relation
$\GreenJ$ and pre-order $\leq_\GreenJ$ in the full tropical matrix semigroup
of arbitrary dimension. In the classical case of finite-dimensional matrices
over a field, it is well known that the relations $\GreenD$ and $\GreenJ$
coincide. Previous work of the authors showed that this correspondence
holds for $2 \times 2$ tropical matrices \cite{K_tropicalgreen}, but
Izhakian and Margolis \cite{IzhakianPrivComm} have shown that it does not
extend to higher dimensional tropical matrix semigroups over the tropical semiring with
$-\infty$; indeed, they have found an example
of a $\GreenJ$-class in the $3 \times 3$ tropical matrix semigroup which
contains infinitely many $\GreenD$-classes.

Our main technical result (Theorem~\ref{jcharrow}) gives a precise 
characterisation of the $\GreenJ$-order (and hence also of 
$\GreenJ$-equivalence) in terms of morphisms between certain tropical convex sets: 
specifically, $A \OrdJ B$ exactly if there exists a convex set $Y$ such 
that the row space of $B$ maps surjectively onto $Y$, and the row space of 
$A$ embeds injectively into $Y$. Using duality theorems, we also show
(Theorem~\ref{jnecessary}) that 
if $A \OrdJ B$ then the row space of $A$ admits an isometric (with respect 
to the Hilbert projective metric) embedding into the row space of $B$, but 
we show that in general it need \textit{not} admit a linear embedding. 
From these results, we are able to deduce that the semigroup of $n \times 
n$ matrices over the \textit{finitary} tropical semiring (without 
$-\infty$) does satisfy $\GreenD = \GreenJ$.

In the classical case of a finite-dimensional full matrix
semigroup over a field, it is well known that the $\GreenJ$-relation
(and the $\GreenD$-relation, with which it coincides)
encapsulates the concept of \textit{rank}, with the $\GreenJ$-order
corresponding to the obvious order on ranks. Thus, for the semigroup of
matrices over a more general ring or semiring, the $\GreenJ$-class of a
matrix may be thought of as a natural analogue of its rank. This idea
is rather different to traditional notions of rank since it is non-numerical,
taking values in a poset (the $\GreenJ$-order), rather than the natural
numbers.

For tropical matrices, several different (numerical) notions of rank have
been proposed and studied,
both separately and in relation to one another
(see for example \cite{Akian06,Akian09,Butkovic10,Develin05,Izhakian09,Shitov10}). Each of these clearly has merit for particular applications, but
overall we suggest that the proliferation of incompatible definitions is
evidence that the kind of information given by the ``rank'' of a classical
matrix cannot, in the tropical case, be encapsulated in a single natural
number. We believe that the $\GreenJ$-class of a matrix may serve as a
``general purpose'' analogue of rank for tropical mathematics, and partly
with this in mind, the final section of this paper discusses the
relationship between $\GreenJ$-class and some existing notions of rank.

\section{Preliminaries}\label{sec_prelim}

In this section we briefly recall the foundational definitions of tropical
algebra, and establish some elementary properties which will be required
later.

The {\em finitary tropical semiring} $\mathbb{FT}$ is the semiring (without additive identity)
consisting of the real numbers under the operations of addition and maximum.
We write $a \oplus b$ to denote the maximum of $a$ and $b$, and $a \otimes b$
or just $ab$ to denote the sum of $a$ and $b$. Note that both operations are
associative and commutative and that $\otimes$ distributes over $\oplus$.

The {\em tropical semiring} $\mathbb{T}$ is the finitary tropical semiring augmented with an
extra element $-\infty$ which acts as a zero for addition and an identity for maximum.
The {\em completed tropical semiring} $\overline{\mathbb{T}}$ is the tropical
semiring augmented
with an extra element $+\infty$, which acts as a zero for both maximum and addition,
save that $$(-\infty )(+\infty)=(+\infty )(-\infty)=-\infty.$$ Thus
$\mathbb{FT} \subseteq \mathbb{T} \subseteq \overline{\mathbb{T}}$ and
we call the elements of $\mathbb{FT}$ finite elements.

For any commutative semiring $S$, we denote
by $M_n(S)$ the set of all $n \times n$ matrices with entries drawn from
$S$. This has the structure of a semigroup, under the multiplication induced
from the semiring operations in the usual way.

We extend the usual order $\leq$ on $\mathbb{R}$ to a total order on $\mathbb{T}$ and $\overline{\mathbb{T}}$ by
setting $-\infty < x < +\infty$ for all $x \in \mathbb{R}$. Note that $a \oplus b = a$ exactly if $b \leq a$. The semirings $\mathbb{FT}$ and $\overline{\mathbb{T}}$ admit a natural order-reversing involution $x \mapsto -x$,
where of course $- (- \infty) = +\infty$ and $-(+\infty) = -\infty$.

For $S \in \{\mathbb{FT},\mathbb{T}, \overline{\mathbb{T}}\}$ we shall be interested in the space $S^n$ of \emph{affine tropical vectors}. We write $x_i$ for the $i$th component of a vector $x \in S^n$. We extend
$\oplus$ and $\leq$ to $S^n$ componentwise so that $(x\oplus y)_i = x_i \oplus y_i$ and $x \leq y$ exactly if $x_i \leq y_i$ for all $i$. We define a scaling action of $S$ on $S^n$ by
$$\lambda \otimes (x_1, \ldots, x_n) = (\lambda \otimes x_1, \ldots, \lambda \otimes x_n)$$
for each $\lambda \in S$ and each $x \in S^n$.
Similarly, for $S \in \{ \ft,\olt \}$ we extend the involution $x \mapsto -x$
on $S$ to $S^n$ by defining $(-x)_i = -(x_i)$. The scaling and $\oplus$
operations give $S^n$ the structure of an \textit{$S$-module} (sometimes
called an $S$-semimodule since the $\oplus$ operation does not admit
inverses).

From affine tropical $n$-space
we obtain projective tropical $(n-1)$-space (denoted $\mathbb{PFT}^{(n-1)}$,
$\mathbb{PT}^{(n-1)}$ or $\overline{\mathbb{PT}}^{(n-1)}$ as appropriate) by
identifying two vectors if one is a tropical multiple of the other by an
element of $\mathbb{FT}$.

An \textit{$S$-linear convex set} in $S^n$ is a subset closed under $\oplus$ and
scaling by elements of $S$, that is, an $S$-submodule of $S^n$. If $B \subseteq S^n$ then the ($S$-linear)
\textit{convex
hull} of $B$ is smallest convex set containing $B$, that is, the set of all
vectors in $S^n$ which can be written as tropical
linear combinations of finitely many vectors from $B$. Given two convex
sets $X \subseteq S^n$ and $Y \subseteq S^m$, we say that $f: X \rightarrow Y$
is a \emph{linear map} from $X$ to $Y$ if $f(x \oplus x') = f(x) \oplus f(x')$ and $f(\lambda \otimes x)= \lambda\otimes f(x)$ for all $x, x' \in X$ and all $\lambda \in S$.

Since each convex set $X \subseteq S^n$ is closed under scaling, it induces
a subset of the corresponding
projective space, termed the \emph{projectivisation} of $X$. Notice that
one convex set contains another exactly if there is a corresponding
containment of their projectivisations.

Given a
matrix $A \in M_n(S)$ we define the \textit{row space} of $A$, denoted
$R_S(A)$, to be the $S$-linear convex hull of the rows of $A$. Thus
$R_S(A) \subseteq S^n$. Similarly, we define the \textit{column space}
$C_S(A) \subseteq S^n$ to be the $S$-linear convex hull of the columns
of $A$. We shall also be interested in the projectivisation of $C_S(A)$,
which we call the projective column space of $A$ and denote $PC_S(A)$.
Dually, the projective row space $PR_S(A)$ is the projectivisation of
the row space of $A$.

We define a scalar product operation $\overline{\mathbb{T}}^n \times \overline{\mathbb{T}}^n \rightarrow \overline{\mathbb{T}}^n$ on affine tropical $n$-space by setting
$$\langle x \mid y \rangle = {\rm max}\{\lambda \in \overline{\mathbb{T}} : \lambda \otimes x \leq y\}.$$
This is a \textit{residual} operation in the sense of residuation
theory \cite{Blyth72}, and has been frequently employed in max-plus algebra.
Notice that $\langle x \mid y \rangle = +\infty$ if and only if for each
$i$ either $x_i=-\infty$ or $y_i=+\infty$. Thus $\langle x \mid x \rangle = +\infty$ if and only if
$x_i \in \{-\infty, +\infty\}$ for all $i$. It also follows that if $x, y \in \mathbb{T}$ with
$x \neq (-\infty, \ldots, -\infty)$ then $\langle x \mid y \rangle \in \mathbb{T}$. Similarly, we note
that $\langle x \mid y \rangle = -\infty$ if and only if there exists $j$ such that
either $x_j=+\infty \neq y_j$ or $y_j=-\infty \neq x_j$.  Thus if $x, y \in \mathbb{FT}^n$ then
$\langle x \mid y \rangle \in \mathbb{FT}$.

\begin{lemma}
\label{angleopp}
Let $x,y \in \overline{\mathbb{T}}^n$ with $x \neq y$. If $\langle x \mid y \rangle = +\infty$ then
$\langle y \mid x \rangle = -\infty$.
\end{lemma}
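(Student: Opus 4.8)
The plan is to derive the result directly from the two coordinatewise characterisations of when $\langle \cdot \mid \cdot \rangle = +\infty$ and when $\langle \cdot \mid \cdot \rangle = -\infty$ that are recorded immediately before the statement. The hypothesis $\langle x \mid y \rangle = +\infty$ says that for \emph{every} index $i$ we have $x_i = -\infty$ or $y_i = +\infty$; the conclusion $\langle y \mid x \rangle = -\infty$ requires us only to exhibit a \emph{single} index $j$ at which either $y_j = +\infty \neq x_j$ or $x_j = -\infty \neq y_j$. The hypothesis $x \neq y$ is exactly what supplies such a witnessing coordinate.

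First I would fix an index $j$ with $x_j \neq y_j$, which exists precisely because $x \neq y$. Applying the $+\infty$-characterisation at this particular $j$ then yields two (not necessarily mutually exclusive) cases: either $x_j = -\infty$, or $y_j = +\infty$.

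Next I would test each case against the $-\infty$-characterisation. If $x_j = -\infty$, then $y_j \neq -\infty$ because $x_j \neq y_j$, so coordinate $j$ satisfies $x_j = -\infty \neq y_j$; if instead $y_j = +\infty$, then $x_j \neq +\infty$ for the same reason, so $j$ satisfies $y_j = +\infty \neq x_j$. In either case $j$ is a witness of the required form, whence $\langle y \mid x \rangle = -\infty$.

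There is little in the way of a genuine obstacle here: once the coordinatewise descriptions are in hand the argument is a short case split, and the only point needing care is the role of $x \neq y$, which is what upgrades "$x_j = -\infty$" to "$x_j = -\infty \neq y_j$" and "$y_j = +\infty$" to "$y_j = +\infty \neq x_j$". Should one wish to avoid quoting the characterisations, the alternative is to argue from the definition via $\langle x \mid y \rangle = \min_i \langle x_i \mid y_i \rangle$ together with the one-dimensional residual, where the only delicate computations are those involving the conventions $(-\infty)(+\infty) = (+\infty)(-\infty) = -\infty$; but this merely reproves the stated facts, so I would simply invoke them.
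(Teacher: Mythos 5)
Your proof is correct and follows essentially the same route as the paper: both take a coordinate $j$ with $x_j \neq y_j$, apply the characterisation of $\langle x \mid y \rangle = +\infty$ at that coordinate, and use $x_j \neq y_j$ to upgrade the resulting case split into a witness for the $-\infty$-characterisation of $\langle y \mid x \rangle$. Your version merely makes the case analysis slightly more explicit than the paper's one-line treatment; there is no substantive difference.
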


\begin{proof}
Since $x \neq y$ there exists $j$ such that $x_j \neq y_j$. Now, $\langle x \mid y \rangle = +\infty$
implies that for each $i$ either $x_i=-\infty$ or $y_i=+\infty$. Thus either $x_j=-\infty \neq y_j$
or $y_j=+\infty \neq x_j$ and hence, by the remarks preceding the lemma, we find that
$\langle y \mid x \rangle = -\infty$.
\end{proof}

We define a distance function on $\overline{\mathbb{T}}^n$ by $d_H(x,y)=0$ if $x$ is a finite scalar multiple of $y$ and
$$d_H(x,y) = -(\langle x \mid y \rangle \otimes \langle y \mid x \rangle)$$
otherwise. By Lemma \ref{angleopp}, it is easy to see that
$d_H(x,y) \neq -\infty$ for all $x,y \in \overline{\mathbb{T}}^n$. Thus
$d_H(x,y)=+\infty$ unless both $\langle x \mid y \rangle$ and $\langle y \mid x \rangle$ are finite. Moreover, if
$\langle x \mid y \rangle, \langle y \mid x \rangle \in \mathbb{FT}$ then
it is easy to check that $d_H(x,y) \geq 0$. It is also easily verified that
$d_H$ is invariant under scaling $x$ or $y$ by finite scalars
and hence is well-defined on $\overline{\mathbb{PT}}^{(n-1)}$,
$\mathbb{PT}^{(n-1)}$ and $\mathbb{PFT}^{(n-1)}$. For
$x,y \in \mathbb{FT}^n$ we see that $d_H(x,y) \in \mathbb{FT}$. In
fact, it can be shown that $d_H$ is a metric on $\mathbb{PFT}^{(n-1)}$
and an extended metric on $\overline{\mathbb{PT}}^{(n-1)}$ and
$\mathbb{PT}^{(n-1)}$, called
the \emph{(tropical) Hilbert projective metric} (see \cite[Proposition 1.6]{K_tropd}, for example).
In particular, $d_H$ induces obvious definitions of \textit{isometry} and
\textit{isometric embeddings} between subsets of tropical projective spaces.

Now let $S \in \{ \ft, \olt \}$ and let $A \in M_n(S)$. Following \cite{Cohen04} and \cite{Develin04} we
define a map $\theta_A: R_S(A) \rightarrow C_S(A)$ by
$\theta_A(x) = A\otimes (-x)^T$ for all $x \in R_S(A)$.
Dually, we define $\theta'_A : C_S(A) \rightarrow R_S(A)$
by $\theta'_A(x) = (-x)^T\otimes A$ for all $x \in C_S(A)$. We call $\theta_A$
and $\theta'_A$ the \emph{duality maps} for $A$. Notice that the duality
maps do not make sense over $S=\mathbb{T}$, as the involution $x \mapsto -x$
is not defined for $x = - \infty$. The following lemma recalls
some known properties of the duality maps which we shall need.

\begin{lemma}{\rm (Properties of the duality maps \cite{Cohen04,Develin04,K_tropd}).)}\\
\label{dualprop}
Let $S \in \{\mathbb{FT},\overline{\mathbb{T}}\}$ and let $A \in M_n(S)$.
\begin{itemize}
\item[(i)] $\theta_A$ and $\theta'_A$ are mutually inverse bijections between $R_S(A)$ and $C_S(A)$.
\item[(ii)] For all $x, y \in R_S(A)$, $x \leq y$ if and only $\theta_A(y) \leq \theta_A(x)$.\\
For all $x, y \in C_S(A)$, $x \leq y$ if and only $\theta'_A(y) \leq \theta'_A(x)$.\\
    We say that $\theta_A$ and $\theta'_A$ are order reversing.
\item[(iii)] For all $x\in R_S(A)$ and all $\lambda \in \mathbb{FT}$, $\theta_A(\lambda \otimes x) = -\lambda \otimes \theta_A(x)$.\\ For all $x\in C_S(A)$ and all $\lambda \in \mathbb{FT}$, $\theta'_A(\lambda \otimes x) = -\lambda \otimes \theta'_A(x)$.\\
We say that $\theta_A$ and $\theta'_A$ preserve scaling by finite scalars.
\end{itemize}
\end{lemma}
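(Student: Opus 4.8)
The plan is to reduce everything to the componentwise formulae $\theta_A(x)_i = \max_j(A_{ij} - x_j)$ and $\theta'_A(y)_k = \max_i(A_{ik} - y_i)$, obtained by unwinding the definitions of tropical matrix multiplication and the involution. Writing $a_i$ for the $i$th row and $a^j$ for the $j$th column of $A$, these say $\theta_A(x) = \bigoplus_j(-x_j)\otimes a^j$ and $\theta'_A(y) = \bigoplus_i(-y_i)\otimes a_i$, so $\theta_A$ indeed lands in $C_S(A)$ and $\theta'_A$ in $R_S(A)$. Since $t\mapsto -t$ is order-reversing and tropical matrix multiplication is monotone, both maps are order-reversing, which already gives the forward implications in (ii). Part (iii) is immediate: for finite $\lambda$ we have $-(\lambda\otimes x) = (-\lambda)\otimes(-x)$ and finite scalars pull through matrix multiplication, whence $\theta_A(\lambda\otimes x) = -\lambda\otimes\theta_A(x)$, and dually for $\theta'_A$.

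The substance of the lemma is the mutual-inverse statement (i). First I would record the universal bound $\theta'_A\theta_A(x)\le x$ for all $x$: from $\theta'_A\theta_A(x)_k = \max_i\min_j(A_{ik} + x_j - A_{ij})$, specialising the inner minimum to $j=k$ makes each term at most $x_k$. The real work is the reverse inequality for $x\in R_S(A)$, which I would extract from the reconstruction identity $x = \bigoplus_i \langle a_i\mid x\rangle\otimes a_i$ together with the sharper attainment fact that, for each coordinate $k$, the outer maximum is achieved at some $i$ with $\langle a_i\mid x\rangle\otimes A_{ik} = x_k$. For such an $i$ the inner minimum equals $A_{ik} + \langle a_i\mid x\rangle = x_k$ (using $\langle a_i\mid x\rangle = \min_j(x_j - A_{ij})$), so $\theta'_A\theta_A(x)_k\ge x_k$; with the universal bound this yields $\theta'_A\theta_A = \mathrm{id}$ on $R_S(A)$, and dually $\theta_A\theta'_A = \mathrm{id}$ on $C_S(A)$, so the two maps are mutually inverse bijections.

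To justify the reconstruction step I would write a generic element as $x = \bigoplus_p c_p\otimes a_p$. Each summand satisfies $c_p\otimes a_p\le x$, so $c_p\le\langle a_p\mid x\rangle$ by the definition of the residual; hence every $\langle a_p\mid x\rangle\otimes a_p\le x$ and the combination is $\le x$. Conversely, choosing $p$ attaining $x_k = \max_p(c_p + A_{pk})$ gives $x_k = c_p + A_{pk}\le\langle a_p\mid x\rangle + A_{pk}\le x_k$, forcing equality and supplying exactly the witnessing row needed above. The converse half of (ii) then drops out of (i): if $\theta_A(y)\le\theta_A(x)$, applying the order-reversing map $\theta'_A$ and using $\theta'_A\theta_A=\mathrm{id}$ on $R_S(A)$ yields $x\le y$.

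I expect the genuine obstacle to be not this finitary computation but the bookkeeping for $S=\overline{\mathbb{T}}$, where coordinates may equal $\pm\infty$: the max/min rearrangements, the residual formula, and the reconstruction identity must all be checked against the convention $(-\infty)(+\infty)=-\infty$ and the degenerate cases flagged before Lemma~\ref{angleopp}. I would settle the finite case cleanly as above and then either repeat the argument with a careful case analysis on the infinite coordinates, or, since the statement is quoted here as known, defer these details to \cite{Cohen04,Develin04,K_tropd}, where the completed case is already treated.
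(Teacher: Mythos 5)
Your proposal is correct in substance, but note that the paper does not actually prove Lemma~\ref{dualprop}: it quotes it as known, attributing part (i) to \cite{Develin04} (for $\ft$) and \cite{Cohen04} (for $\olt$), part (ii) to \cite{Cohen04}, and part (iii) to \cite{K_tropd}. So where the paper supplies citations, you supply a self-contained argument, and it is essentially the standard residuation proof underlying those references. Your key observation is that $\theta_A(x)_i = -\langle a_i \mid x\rangle$ for the $i$th row $a_i$, so that the reconstruction identity $x = \bigoplus_i \langle a_i \mid x\rangle \otimes a_i$ for $x \in R_S(A)$ yields $\theta'_A\theta_A = \mathrm{id}$ on $R_S(A)$ at once; your derivation of that identity (each coefficient $c_p \le \langle a_p \mid x\rangle$, plus attainment of each coordinate maximum at some generator) is sound, and it even handles the point you did not mention explicitly: over $\ft$ there is no zero element with which to pad coefficients, so $x$ may be a combination of only a subset of the rows, but your argument is unaffected because enlarging the index set of the outer maximum only increases it while the universal bound caps it at $x_k$. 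Deducing the converse half of (ii) from (i) together with the forward (monotonicity) half, and (iii) from finite-scalar cancellation, is likewise correct. The one genuine caveat is the one you flag yourself: over $\olt$ the completed case is not a routine transcription of the finite computation, since several intermediate identities fail at infinite entries under the convention $(-\infty)(+\infty)=-\infty$ — for instance $-(a \otimes b) \ne (-a)\otimes(-b)$ when $\{a,b\} = \{-\infty,+\infty\}$, and the cancellation $A_{ik} \otimes x_k \otimes (-A_{ik}) = x_k$ requires $A_{ik}$ finite — so one really does need either the case analysis you promise or the deferral to \cite{Cohen04}; since the paper itself treats the lemma as quoted background, that deferral is entirely consistent with its approach, and what your route buys in exchange is an explicit, checkable proof of the finitary case together with the identity $\theta_A(x)_i = -\langle a_i\mid x\rangle$, which makes the duality transparent.
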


Of the properties in the lemma, part (i) is established for $S = \ft$
in \cite{Develin04} and for $S = \olt$ in \cite{Cohen04}. Part (ii) is
shown in \cite{Cohen04}. Part (iii) is proved in \cite{K_tropd}, which
also includes an expository account of the other two parts.

We now recall the
``metric duality theorem'' of \cite{K_tropd}.

\begin{theorem}{\rm (Metric duality theorem.)}\\
\label{dualmet}
Let $S \in \{\mathbb{FT},\overline{\mathbb{T}}\}$ and let $A \in M_n(S)$. Then the duality maps $\theta_A$ and $\theta'_A$ induce mutually inverse isometries (with respect to the Hilbert projective metric) between $PR_S(A)$ and $PC_S(A)$.
\end{theorem}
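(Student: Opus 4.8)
The plan is to prove the metric duality theorem (Theorem~\ref{dualmet}) by combining the order-theoretic and scaling properties of the duality maps recorded in Lemma~\ref{dualprop} to show that $\theta_A$ preserves the scalar product $\langle \cdot \mid \cdot \rangle$, and hence the Hilbert projective metric $d_H$, after passing to projectivisations. By Lemma~\ref{dualprop}(i), $\theta_A$ and $\theta'_A$ are mutually inverse bijections between $R_S(A)$ and $C_S(A)$, and by Lemma~\ref{dualprop}(iii) they preserve scaling by finite scalars; hence they descend to mutually inverse bijections between the projectivisations $PR_S(A)$ and $PC_S(A)$. It therefore remains only to check that $d_H$ is preserved, and since $d_H$ is defined in terms of the residual scalar product, the crux is to understand how $\theta_A$ interacts with $\langle \cdot \mid \cdot \rangle$.

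\begin{proof}
First I would establish the key identity that for all $x, y \in R_S(A)$,
$$\langle \theta_A(x) \mid \theta_A(y) \rangle = \langle y \mid x \rangle.$$
The idea is to unwind the definition of the residual: $\langle \theta_A(x) \mid \theta_A(y) \rangle$ is the largest $\lambda$ with $\lambda \otimes \theta_A(x) \leq \theta_A(y)$. Using Lemma~\ref{dualprop}(iii) to rewrite $\lambda \otimes \theta_A(x)$ as $\theta_A((-\lambda) \otimes x)$, and then Lemma~\ref{dualprop}(ii) to convert the inequality $\theta_A((-\lambda)\otimes x) \leq \theta_A(y)$ into the reversed inequality $y \leq (-\lambda) \otimes x$, one sees that the largest admissible $\lambda$ corresponds exactly to the largest $\mu = -\lambda$ with $\mu \otimes x \geq y$, equivalently $\mu \otimes x \leq$ is replaced by the residual computing $\langle y \mid x \rangle$. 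Carrying this bookkeeping through (and treating the boundary cases where the residual takes the value $\pm\infty$ separately, using Lemma~\ref{angleopp} and the remarks preceding it) yields the displayed identity.

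Granting this identity, the rest is essentially symmetric bookkeeping. Applying it twice, once with the roles of $x$ and $y$ as above and once with them interchanged, gives
$$\langle \theta_A(x) \mid \theta_A(y) \rangle \otimes \langle \theta_A(y) \mid \theta_A(x) \rangle = \langle y \mid x \rangle \otimes \langle x \mid y \rangle,$$
so that $d_H(\theta_A(x), \theta_A(y)) = d_H(x,y)$ directly from the definition of $d_H$ as the negated product of the two residuals (the case where $x$ is a finite scalar multiple of $y$, giving distance $0$, is handled by Lemma~\ref{dualprop}(iii), which shows the same holds for their images). Since $d_H$ is well-defined on projective space and $\theta_A$ descends to a bijection $PR_S(A) \to PC_S(A)$, this shows $\theta_A$ induces an isometry; the same argument applied to $\theta'_A$, or alternatively the fact that $\theta'_A$ is the inverse of $\theta_A$ by Lemma~\ref{dualprop}(i), gives that the induced maps are mutually inverse isometries.
\end{proof}

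The step I expect to be the main obstacle is the residual identity $\langle \theta_A(x) \mid \theta_A(y) \rangle = \langle y \mid x \rangle$. The formal manipulation using the order-reversing and scaling-preserving properties is clean on finite vectors, but the residual is a $\max$ over a potentially infinite set and can take the values $\pm\infty$, so the delicate part is verifying that the supremum defining the residual is attained and transported correctly through $\theta_A$ at the boundary, especially over $\olt$ where $\pm\infty$ entries interact. I would isolate the finite case first, where Lemma~\ref{dualprop}(ii) and~(iii) apply most transparently, and then handle the infinite values by the characterisations of when $\langle x \mid y \rangle$ equals $+\infty$ or $-\infty$ given in the remarks before Lemma~\ref{angleopp}.
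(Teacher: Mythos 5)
The paper does not actually prove Theorem~\ref{dualmet}: it is recalled, with attribution, from \cite{K_tropd} (``We now recall the `metric duality theorem' of \cite{K_tropd}''), so there is no in-paper proof to compare yours against; your proposal is in effect a self-contained reconstruction of the imported result, and it is correct. The key identity $\langle \theta_A(x)\mid\theta_A(y)\rangle=\langle y\mid x\rangle$ follows exactly as you sketch: for finite $\lambda$, Lemma~\ref{dualprop}(iii) gives $\lambda\otimes\theta_A(x)=\theta_A((-\lambda)\otimes x)$ with $(-\lambda)\otimes x\in R_S(A)$ (row spaces are closed under finite scaling), and Lemma~\ref{dualprop}(ii) converts $\theta_A((-\lambda)\otimes x)\le\theta_A(y)$ into $y\le(-\lambda)\otimes x$, i.e.\ $\lambda\otimes y\le x$; hence the two residuals have exactly the same set of admissible \emph{finite} scalars. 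Your anticipated obstacle at the boundary values over $\olt$ resolves cleanly: both admissibility sets are down-closed subsets of $\olt$ containing $-\infty$, and residuals in the complete semiring $\olt$ are attained, so agreement of the finite parts already forces agreement of the maxima in the $\pm\infty$ cases (one set has maximum $+\infty$, respectively $-\infty$, iff it contains every, respectively no, finite scalar, and these conditions transfer between the two sets); over $\ft$ all the relevant quantities are finite and nothing further needs checking. One slip of prose: the largest admissible $\lambda$ corresponds to the \emph{smallest} $\mu=-\lambda$ with $y\le\mu\otimes x$, not the largest, since negation reverses the order — this does not affect your conclusion. The remaining steps — descent to mutually inverse bijections of $PR_S(A)$ and $PC_S(A)$ via Lemma~\ref{dualprop}(i) and (iii), the $d_H=0$ case via scaling, and deducing $d_H(\theta_A(x),\theta_A(y))=d_H(x,y)$ from the definition of $d_H$ — are all sound.
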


\section{Green's Relations}\label{sec_greens}

Green's relations are five equivalence relations and three pre-orders,
which can be defined on any semigroup, and which together describe the
(left, right and two-sided) principal ideal structure of the semigroup.
We give here brief definitions; for fuller discussion, proof of claimed
properties and equivalent formulations, we refer the reader to an
introductory text such as \cite{Howie95}.

Let $S$ be any semigroup. If $S$ is a monoid, we set $S^1 = S$, and otherwise
we denote by $S^1$ the monoid obtained by adjoining a new identity element
$1$ to $S$. We define a binary relation $\leq_\GreenR$ on $S$ by
$a \OrdR b$ if $a S^1 \subseteq b S^1$, that is, if either $a = b$
or there exists $q$ with $a = bq$. We define another relation $\GreenR$ by
$a \RelR b$ if and only if $a S^1 = b S^1$.

The relations $\leq_\GreenL$ and $\GreenL$ are the left-right duals of
$\leq_\GreenR$ and $\GreenR$, so $a \OrdL b$ if $S^1 a \subseteq S^1 b$,
and $a \RelL b$ if $S^1 a = S^1 b$. The relations $\leq_\GreenJ$ and $\GreenJ$
are two-sided analogues, so $a \OrdJ b$ if $S^1 a S^1 \subseteq S^1 b S^1$,
and $a \RelJ b$ if $S^1 a S^1 = S^1 b S^1$.
We also define a relation $\GreenH$ by $a \RelH b$ if $a \RelL b$ and
$a \RelR b$. Finally, the relation $\GreenD$ is defined by $a \RelD b$
if there exists an element $c \in S$ such that $a \RelR c$ and $c \RelL a$.

The relations $\GreenR$, $\GreenL$, $\GreenH$, $\GreenJ$ and $\GreenD$ are
equivalence relations; this is trivial in the first four cases, but requires
slightly more work in the case of $\GreenD$. The relations $\leq_\GreenR$,
$\leq_\GreenL$ and $\leq_\GreenJ$ are pre-orders (reflexive, transitive binary
relations) each of which induces a partial order on the equivalence classes
of the corresponding equivalence relation.

The study of Green's relations for the full tropical matrix semigroups was
begun (in the $2 \times 2$ case) by the authors \cite{K_tropicalgreen} and continued
in greater generality by Hollings and the second author \cite{K_tropd}. Some
key results of those papers are summarised in the following two theorems;
see \cite[Proposition 3.1]{K_tropd}, \cite[Theorem 5.1]{K_tropd}, \cite[Theorem 5.5]{K_tropd}
and \cite[Theorem 3.5]{K_tropd} for full details and proofs.

\begin{theorem}{\rm (Known characterisations of Green's Relations.)}\\
\label{greenchar}
Let $A, B \in M_n(S)$ for $S \in \{\mathbb{FT},\mathbb{T}, \overline{\mathbb{T}}\}$.
\begin{itemize}
\item[(i)] $A \OrdL B$ if and only if $R_S(A) \subseteq R_S(B)$;
\item[(ii)] $A \RelL B$ if and only if $R_S(A) = R_S(B)$;
\item[(iii)] $A \OrdR B$ if and only if $C_S(A) \subseteq C_S(B)$;
\item[(iv)] $A \RelR B$ if and only if $C_S(A) = C_S(B)$;
\item[(v)] $A \RelH B$ if and only if $R_S(A) = R_S(B)$ and $C_S(A) = C_S(B)$;
\item[(vi)] $A \RelD B$ if and only if  $C_S(A)$ and $C_S(B)$ are isomorphic as $S$-modules;
\item[(vii)] $A \RelD B$ if and only if  $R_S(A)$ and $R_S(B)$ are isomorphic as $S$-modules.
\end{itemize}
\end{theorem}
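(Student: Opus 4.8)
The plan is to translate one-sided multiplication in $M_n(S)$ into inclusions of row and column spaces, and then to bootstrap the two-sided statements (v)--(vii) from the one-sided ones together with the duality machinery of Lemma~\ref{dualprop} and Theorem~\ref{dualmet}. First I would treat (i). The key observation is that $A \OrdL B$ means $A = B$ or $A = CB$ for some $C \in M_n(S)$, and in either case each row of $A$ is a tropical linear combination $\bigoplus_j c_{ij} \otimes b_j$ of the rows $b_1, \dots, b_n$ of $B$, whence $R_S(A) \subseteq R_S(B)$. Conversely, if $R_S(A) \subseteq R_S(B)$ then each row $a_i$ of $A$ lies in $R_S(B)$, hence is expressible as such a combination; collecting the $c_{ij}$ into a matrix $C$ gives $A = CB$ and so $A \OrdL B$. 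Over $\trop$ and $\olt$ the coefficients may be taken from $\{-\infty\} \cup \ft$ directly, and the identity matrix makes $M_n(S)$ a monoid, so nothing further is needed. Over $\ft$ there is a genuine wrinkle: there is no $-\infty$ and no identity matrix. It is resolved by noting that for the rows $b_j$ not actually used in an expression for $a_i$ one may set $c_{ij} = \langle b_j \mid a_i \rangle$, which by the remarks preceding Lemma~\ref{angleopp} lies in $\ft$ (as $a_i, b_j \in \ft^n$) and satisfies $c_{ij} \otimes b_j \leq a_i$; adding these terms leaves the combination unchanged, so $C$ can be chosen over $\ft$.

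Part (iii) is the exact left--right dual, working with columns and right multiplication, and parts (ii) and (iv) follow at once by applying (i) and (iii) in both directions and using the antisymmetry of set inclusion. Part (v) is then immediate from the definition $\GreenH = \GreenL \cap \GreenR$ together with (ii) and (iv).

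The substance lies in (vi) and (vii). For the forward direction of (vi), suppose $A \RelD B$, witnessed by $C$ with $A \RelR C$ and $C \RelL B$; by (iv) and (ii) this means $C_S(C) = C_S(A)$ and $R_S(C) = R_S(B)$. For $S \in \{\ft, \olt\}$ I would produce a module isomorphism $C_S(A) \to C_S(B)$ as the composite $\theta_B \circ \theta'_C$, which is defined precisely because $C_S(C) = C_S(A)$ and $R_S(C) = R_S(B)$. Each duality map is an order-reversing, scaling-preserving bijection (Lemma~\ref{dualprop}), and an order-reversing bijection carries each join $\oplus$ to the corresponding greatest lower bound in the codomain (which exists as the image of that join) and vice versa; hence the composite carries $\oplus$ to $\oplus$, and is therefore a genuine $S$-module isomorphism. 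Granting (vi), part (vii) follows over $\ft$ and $\olt$, since $R_S(M)$ and $C_S(M)$ are anti-isomorphic via $\theta_M$, so that a composite anti-isomorphism--isomorphism--anti-isomorphism is an isomorphism. For $S = \trop$, where the duality maps are unavailable, I would instead argue at the level of principal ideals via Green's Lemma: an $\GreenR$-relating pair induces a right-translation bijection of the corresponding $\GreenL$-classes respecting the right action, which furnishes the required isomorphism.

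The main obstacle is the converse of (vi): given an abstract $S$-module isomorphism $\phi : C_S(A) \to C_S(B)$, one must manufacture a bridging matrix $C$ with $C_S(C) = C_S(A)$ and $R_S(C) = R_S(B)$. The naive attempt---letting $C$ have columns $\phi(a_1), \dots, \phi(a_n)$---controls the column space, since it becomes $\phi(C_S(A)) = C_S(B)$, but gives no control over the row space, precisely because $\phi$ is only a morphism of the submodule $C_S(A)$ and need not be the restriction of any ambient matrix action, so its effect on coordinates cannot be read off. The route I would pursue is to transport $\phi$ through duality to an isomorphism $\bar\phi = \theta'_B \circ \phi \circ \theta_A : R_S(A) \to R_S(B)$, and then exploit the rigidity forced by the metric duality Theorem~\ref{dualmet}, which constrains such isomorphisms tightly on the projectivisations, to pin down via the residual $\langle \cdot \mid \cdot \rangle$ the entries of a single matrix $C$ realising both prescribed spaces. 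I expect verifying that this yields $C_S(C) = C_S(A)$ and $R_S(C) = R_S(B)$ \emph{exactly}, rather than mere containments, to be the most delicate point, and it is here that the detailed analysis of \cite[Theorem~3.5]{K_tropd} does the real work.
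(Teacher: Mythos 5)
Your treatment of (i)--(v) is essentially correct: the translation of one-sided divisibility into row/column space inclusion is the standard argument, and the residuation trick $c_{ij} = \langle b_j \mid a_i \rangle$ correctly handles the absence of $-\infty$ over $\ft$. Note, though, that the paper does not prove Theorem~\ref{greenchar} at all --- it quotes it from \cite{K_tropicalgreen} and \cite{K_tropd} --- so the benchmark is the argument of \cite{K_tropd}. For the forward directions of (vi) and (vii) your duality-composition route is heavier than necessary and has a hole over $\olt$: Lemma~\ref{dualprop}(iii) only gives preservation of scaling by \emph{finite} scalars, so the composite $\theta_B \circ \theta'_C$ is not yet known to respect scaling by $\pm\infty$, which the definition of a linear map over $\olt$ requires. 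The elementary route avoids all of this and works uniformly over $\ft$, $\trop$ and $\olt$: if $A \RelR C$ via $A = CQ$ and $C = AP$, then $x \mapsto x \otimes Q$ and $x \mapsto x \otimes P$ are linear maps between $R_S(C)$ and $R_S(A)$ whose composites fix the generating rows and hence are mutually inverse; together with $R_S(C) = R_S(B)$ when $C \RelL B$, this gives (vii) forward (and dually (vi)) with no duality maps at all, so your separate Green's-Lemma patch for $\trop$ is unnecessary.

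The genuine gap is the converse of (vi)/(vii), which you do not actually prove. Your diagnosis that the naive matrix $C$ with columns $\phi(a_1), \dots, \phi(a_n)$ ``gives no control over the row space'' is mistaken, and your fallback --- extracting linear data from the rigidity of the Metric Duality Theorem~\ref{dualmet} --- cannot succeed: isometries of projective row spaces carry no linear information, as this very paper emphasises (see the remark after Theorem~\ref{jnecessary}, and Example~\ref{ex_isometrynotsuff}, which exhibits isometric projective row spaces whose underlying modules are \emph{not} isomorphic). The missing ingredient is exactly Theorem~\ref{useful}, which you had available: with $C$ defined naively, the columns of $C$ generate $\phi(C_S(A)) = C_S(B)$, so $C \RelR B$; moreover $\phi$ itself is a linear morphism $C_S(A) \to C_S(C)$ taking the $i$th column of $A$ to the $i$th column of $C$, which by Theorem~\ref{useful} forces $R_S(C) \subseteq R_S(A)$, and applying the same theorem to $\phi^{-1}$ gives the reverse inclusion; hence $A \RelL C \RelR B$ and $A \RelD B$. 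This is the argument of \cite[Theorem~3.5]{K_tropd} to which you defer. It covers $S \in \{\ft, \olt\}$, the scope of Theorem~\ref{useful}; the $\trop$ case of (vi) and (vii), which your proposal also leaves open, requires the separate inheritance analysis carried out in \cite{K_tropd} (compare Theorem~\ref{greeninherit}), since the duality machinery is unavailable there.
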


\begin{theorem}{\rm (Inheritance of $\mathcal{L}, \mathcal{R}$, $\mathcal{H}$ and $\mathcal{D}$.)}\\
\label{greeninherit}
Consider $M_n(\mathbb{FT})\subseteq M_n(\mathbb{T}) \subseteq M_n(\overline{\mathbb{T}})$. Each of Green's pre-orders $\leq_{\mathcal{L}}$, $\leq_{\mathcal{R}}$ and equivalence relations $\mathcal{L}, \mathcal{R}$, $\mathcal{H}$ and $\mathcal{D}$ in $M_n(\mathbb{FT})$ or $M_n(\mathbb{T})$ is the restriction of the corresponding relation in $M_n(\overline{\mathbb{T}})$.
\end{theorem}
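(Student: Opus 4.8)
The plan is to prove Theorem~\ref{greeninherit} by establishing inheritance for each relation in turn, reducing everything to the characterisations already provided in Theorem~\ref{greenchar}. The crucial observation is that all of parts (i)--(vii) of Theorem~\ref{greenchar} express Green's relations purely in terms of row spaces, column spaces, and module structure, so the question of inheritance reduces to understanding how these objects behave under the inclusions $M_n(\mathbb{FT}) \subseteq M_n(\mathbb{T}) \subseteq M_n(\overline{\mathbb{T}})$.

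First I would handle the pre-orders $\leq_{\mathcal{L}}$ and $\leq_{\mathcal{R}}$, and the equivalence relations $\mathcal{L}$, $\mathcal{R}$, $\mathcal{H}$. By Theorem~\ref{greenchar}(i)--(v), these are governed by containment and equality of row/column spaces. The key point to verify is that for $A \in M_n(\mathbb{FT})$, the row space $R_{\overline{\mathbb{T}}}(A)$ computed in the larger semiring, when intersected with $\mathbb{FT}^n$, recovers $R_{\mathbb{FT}}(A)$ --- and similarly for $\mathbb{T}$. Since a matrix in $M_n(\mathbb{FT})$ has finite entries, its rows lie in $\mathbb{FT}^n$; a tropical linear combination of finite vectors using a coefficient $-\infty$ simply omits that vector, and using $+\infty$ would force $+\infty$-entries, so the finite part of the convex hull over $\overline{\mathbb{T}}$ coincides with the convex hull over $\mathbb{FT}$. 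This gives the needed compatibility: $R_{\mathbb{FT}}(A) \subseteq R_{\mathbb{FT}}(B)$ if and only if $R_{\overline{\mathbb{T}}}(A) \subseteq R_{\overline{\mathbb{T}}}(B)$, and likewise with equality, and the analogous statements for columns. Once this is in hand, inheritance of $\leq_{\mathcal{L}}, \leq_{\mathcal{R}}, \mathcal{L}, \mathcal{R}, \mathcal{H}$ follows immediately by applying the characterisations in both semirings. The case of the intermediate semiring $\mathbb{T}$ is entirely analogous, the only extra element being $-\infty$, which behaves harmlessly under the convex-hull operation.

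The main obstacle I anticipate lies in the $\mathcal{D}$-relation, characterised in Theorem~\ref{greenchar}(vi)--(vii) by \emph{module isomorphism} of row or column spaces rather than by literal equality or containment. Here inheritance is not automatic: one must check that two finitary matrices $A, B \in M_n(\mathbb{FT})$ have isomorphic row spaces \emph{as $\mathbb{FT}$-modules} precisely when they have isomorphic row spaces \emph{as $\overline{\mathbb{T}}$-modules}. One direction is delicate --- given an $\overline{\mathbb{T}}$-module isomorphism between $R_{\overline{\mathbb{T}}}(A)$ and $R_{\overline{\mathbb{T}}}(B)$, I would argue that it must restrict to a bijection between the finite parts $R_{\mathbb{FT}}(A)$ and $R_{\mathbb{FT}}(B)$, yielding an $\mathbb{FT}$-module isomorphism. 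The argument would use that a linear isomorphism respects the scaling action and the $\oplus$ operation, and that the finite vectors can be characterised intrinsically (for instance via finiteness of the Hilbert projective metric, or as those vectors admitting finite scalar multiples in both directions), a property preserved by any module isomorphism. The converse direction is easier, since an $\mathbb{FT}$-module isomorphism should extend to the $\overline{\mathbb{T}}$-completions of the spaces. I expect this identification of the ``finitary part'' as an intrinsically-defined, isomorphism-invariant substructure to be the technical heart of the proof, after which inheritance of $\mathcal{D}$ follows by combining the two directions with Theorem~\ref{greenchar}(vi)--(vii).
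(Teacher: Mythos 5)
Your proposal cannot be compared with an in-paper proof, because the paper does not prove Theorem~\ref{greeninherit} at all: it is quoted from Hollings and Kambites \cite{K_tropd} (the paper points to Theorem~3.5 there), and the closest in-paper model, Proposition~\ref{jinherit}, proceeds by direct entrywise perturbation of the multiplier matrices rather than by your module-theoretic route. Judged on its own merits, most of your plan is sound. The treatment of $\leq_{\mathcal{L}}$, $\leq_{\mathcal{R}}$, $\mathcal{L}$, $\mathcal{R}$, $\mathcal{H}$ via Theorem~\ref{greenchar}(i)--(v) works: a $-\infty$ coefficient can simply be discarded and a $+\infty$ coefficient on a finite row would create $+\infty$ entries, so $R_{\overline{\mathbb{T}}}(A) \cap \mathbb{FT}^n = R_{\mathbb{FT}}(A)$, and containments transfer in both directions. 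Your finitary $\mathcal{D}$ argument is also correct and can be made completely precise: for $A \in M_n(\mathbb{FT})$ one has $R_{\overline{\mathbb{T}}}(A) = R_{\mathbb{FT}}(A) \cup \{\bot, \top\}$ where $\bot = (-\infty,\dots,-\infty)$ and $\top = (+\infty,\dots,+\infty)$; any $\overline{\mathbb{T}}$-module isomorphism $g$ fixes $\bot$ (it preserves scaling by $-\infty$) and cannot send a finite $x$ to $\top$ (since then $g(\top) = g(+\infty \otimes x) = +\infty \otimes g(x) = \top$ contradicts injectivity), so it restricts to an $\mathbb{FT}$-isomorphism of the finite parts. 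Note also that your ``easier converse direction'' needs no extension argument at all: for every Green relation, relatedness in a subsemigroup trivially implies relatedness in the oversemigroup, because the witnessing matrix $C$ and the defining equations persist under the inclusion.

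The genuine gap is that the theorem also asserts that $\mathcal{D}$ in $M_n(\mathbb{T})$ is the restriction of $\mathcal{D}$ in $M_n(\overline{\mathbb{T}})$, and your $\mathcal{D}$ argument addresses only ``two finitary matrices $A, B \in M_n(\mathbb{FT})$''; the $\mathbb{T}$ case is \emph{not} analogous, and it is exactly where your proposed intrinsic characterisations break down. For $A \in M_n(\mathbb{T})$ the set that must be shown invariant is $R_{\overline{\mathbb{T}}}(A) \cap \mathbb{T}^n = R_{\mathbb{T}}(A)$, the $+\infty$-free vectors, which include $\bot$ and vectors with arbitrary $-\infty$-support. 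Such vectors lie at infinite Hilbert distance from all finite vectors (and from each other when their supports differ), so ``finiteness of the Hilbert projective metric'' does not pick them out; and ``admitting finite scalar multiples in both directions'' fails to separate them from the vectors to be excluded, since, e.g., $(0,+\infty,\dots)$ is moved freely by finite scaling just as a $\mathbb{T}$-vector is. Thus the technical heart of your proof has no content in the hardest case the theorem covers. A repair along your lines does exist but requires a different invariant: in a submodule of $\overline{\mathbb{T}}^n$ generated by $\mathbb{T}$-vectors, $x$ is $+\infty$-free if and only if the only element $y$ of the module with $y \leq \lambda \otimes x$ for all $\lambda \in \mathbb{FT}$ is $\bot$ (if $x$ has a $+\infty$ entry, the join of the $+\infty$-scaled generators occurring in an expression for $x$ is such a $y \neq \bot$; conversely if $x$ is $+\infty$-free, letting $\lambda \to -\infty$ forces $y = \bot$). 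This condition is phrased purely in terms of the order and the scaling action, hence is transported by any $\overline{\mathbb{T}}$-module isomorphism, after which restriction and Theorem~\ref{greenchar}(vii) over $\mathbb{T}$ finish the argument (and the comparison of $\mathbb{FT}$ with $\mathbb{T}$ follows similarly from $R_{\mathbb{T}}(A) = R_{\mathbb{FT}}(A) \cup \{\bot\}$). Without an argument of this kind, the $M_n(\mathbb{T})$ half of the $\mathcal{D}$ statement remains unproved.
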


The $\mathcal{J}$-relation and $\leq_{\mathcal{J}}$ pre-order for the semigroups $M_n(S)$ with $S \in \{\mathbb{FT},\mathbb{T}, \overline{\mathbb{T}}\}$ have so far remained rather mysterious. We shall give a characterisation of the $\mathcal{J}$-relation in these tropical matrix semigroups.

\section{The $\mathcal{J}$-relation is inherited}

In many applications one wishes to work with the tropical semiring
$\mathbb{T}$, but for theoretical purposes it is often nicer to work
over the finitary tropical semiring $\ft$ or the completed tropical semiring
$\overline{\mathbb{T}}$. The following result is an analogue for
the $\GreenJ$-order and $\GreenJ$-equivalence of Theorem \ref{greeninherit}
above, saying that these relations in a full matrix semigroup over $\trop$
and $\ft$ are inherited from the corresponding semigroup over $\olt$.
Hence, in order to understand these relations in all three cases,
it suffices to study them for $\olt$, and we shall for much of the remainder of the
paper work chiefly with $\olt$.

\begin{proposition}{\rm (Inheritance of $\mathcal{J}$.)}\\
\label{jinherit}
Consider $M_n(\mathbb{FT})\subseteq M_n(\mathbb{T}) \subseteq M_n(\overline{\mathbb{T}})$.
\begin{itemize}
\item[(i)] Let $A,B \in M_n(\mathbb{FT})$. Then
$$A \OrdJ B \textrm{ in } M_n(\mathbb{FT}) \textrm{ if and only if } A \OrdJ B \textrm{ in } M_n(\mathbb{T}).$$
\item[(ii)] Let $A,B \in M_n(\mathbb{T})$. Then
$$A \OrdJ B \textrm{ in } M_n(\mathbb{T}) \textrm{ if and only if } A \OrdJ B \textrm{ in } M_n(\overline{\mathbb{T}}).$$
\end{itemize}
\end{proposition}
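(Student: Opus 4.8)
The plan is to prove both biconditionals by proving the forward implication in each case via the monotonicity of the inclusions $M_n(\ft) \subseteq M_n(\trop) \subseteq M_n(\olt)$, and the reverse implication by a restriction argument that shows factorisations witnessing $A \OrdJ B$ in the larger semigroup can be pushed down to the smaller one. Recall that $A \OrdJ B$ means $A = P B Q$ for some $P, Q$ in the (monoid completion of the) relevant matrix semigroup. For the forward direction of both (i) and (ii), suppose $A = PBQ$ holds with $P, Q$ drawn from the smaller semigroup; then since the smaller semigroup embeds in the larger one, the same equation witnesses $A \OrdJ B$ over the larger semiring. (One must take a little care with the adjoined identity $1$, but this is routine: the identity matrix $I$ with $0$ on the diagonal and $-\infty$ off-diagonal serves as a genuine multiplicative identity inside each $M_n(S)$, so $S^1$-factorisations can be taken to use honest matrices.)

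The substantive content is the reverse direction. For part (ii), I would argue as follows. Suppose $A, B \in M_n(\trop)$ and $A = PBQ$ for some $P, Q \in M_n(\olt)$. The entries of $P$ and $Q$ may involve $+\infty$, which we must eliminate while preserving the product. The key observation is that since $A$ and $B$ have no $+\infty$ entries, one can clip the entries of $P$ and $Q$: replace any $+\infty$ by a sufficiently large finite value, or argue directly that the $+\infty$ entries of $P, Q$ cannot contribute to finite entries of the product $A$ without being annihilated. More precisely, I would examine the expression $A_{ij} = \bigoplus_{k,\ell} P_{ik} \otimes B_{k\ell} \otimes Q_{\ell j}$ and use the annihilation rule $(-\infty)(+\infty) = -\infty$ together with the fact that $A_{ij} \in \trop$ (so $A_{ij} \neq +\infty$) to show that any term involving $+\infty$ in $P$ or $Q$ is forced to equal $-\infty$ (hence contributes nothing to the maximum) or can be replaced by a term over $\trop$ giving the same value. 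This yields matrices $P', Q' \in M_n(\trop)$ with $A = P' B Q'$.

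For part (i), the reverse direction passes from $\trop$ down to $\ft$, and here the issue is eliminating $-\infty$ entries from $P, Q$ rather than $+\infty$. Because $A, B \in M_n(\ft)$ have all finite entries, I expect a perturbation/clipping argument: replace each $-\infty$ entry of $P$ and $Q$ by a very large negative finite number $-N$, and show that for $N$ sufficiently large the product $P B Q$ is unchanged (each formerly-$-\infty$ term stays below the maximum that produces the finite entry $A_{ij}$). The finiteness of $A$ and $B$ guarantees there is a genuine finite ``gap'' to exploit, so such an $N$ exists.

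The main obstacle I anticipate is the bookkeeping in part (ii): making rigorous the claim that $+\infty$ entries in the factors can be discarded requires careful tracking of which matrix products can produce $+\infty$, using precisely the degenerate rule $(-\infty)(+\infty) = (+\infty)(-\infty) = -\infty$ that distinguishes $\olt$ from a naive completion. One must verify that replacing offending entries does not accidentally change some other entry of the product from its intended value in $\trop$; I would handle this by treating the entries of the product entry-by-entry and showing each replacement is locally harmless, rather than attempting a single global substitution.
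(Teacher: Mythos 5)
Your proposal is correct and follows essentially the same route as the paper: for part (i) you replace each $-\infty$ entry of $P$ and $Q$ by a sufficiently negative finite constant (the paper's $\delta$, your $-N$) and check entrywise that the maximizing term --- which necessarily has $P_{i,k}$ and $Q_{l,j}$ finite because $A_{i,j}$ is finite --- still dominates all perturbed terms, and for part (ii) you eliminate $+\infty$ entries by finite substitution (the paper uses $0$; note that \emph{any} finite value works, not a ``sufficiently large'' one, since every term that involved a $+\infty$ factor must retain an annihilating $-\infty$ factor, exactly the observation you make in your alternative phrasing). One minor slip worth flagging: your parenthetical claim that the matrix with $0$ on the diagonal and $-\infty$ off-diagonal lies in each $M_n(S)$ fails for $S = \ft$ (which contains no $-\infty$, so $M_n(\ft)$ has no identity for $n \geq 2$), but this is harmless --- the forward implications need only the formal adjoined identity, and in the reverse directions the identity of the larger semigroup is an honest matrix there.
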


\begin{proof}
(i) It is clear that if $A \OrdJ B$ in $M_n(\mathbb{FT})$  then $A \OrdJ B$ in $M_n(\mathbb{T})$. Suppose now that $A \OrdJ B$ in $M_n(\mathbb{T})$. Thus there exist $P,Q \in M_n(\mathbb{T})$ such that $A=PBQ$ giving
\begin{equation}\label{eq_dag}
A_{i,j} = \bigoplus_{k=1}^{n} \bigoplus_{l=1}^n P_{i,k}B_{k,l}Q_{l,j}
\end{equation}
for all $i, j \in \lbrace 1, \dots, n \rbrace$.
Since $P,B,Q$ each have finitely many entries we may choose $\delta \in \mathbb{R}$ such that:
\begin{itemize}
\item[(1)] $\delta \leq p+b+q-b'-p'$ for every pair of (necessarily finite) entries $b,b'$ in B and all finite entries $p,p'$ in $P$ and $q$ in $Q$;
\item[(2)] $\delta \leq p+b+q-b'-q'$ for every pair of (necessarily finite) entries $b,b'$ in B and all finite entries $p$ in $P$ and $q,q'$ in $Q$;
\item[(3)] $2\delta \leq p+b+q-b'$ for every pair of (necessarily finite) entries $b,b'$ in B and all finite entries $p$ in $P$ and $q$ in $Q$.
\end{itemize}
Now let $P', Q'$ be the matrices obtained from $P$ and $Q$ respectively by
replacing each $-\infty$ entry by $\delta$. Thus $P', Q' \in M_n(\mathbb{FT})$.
We shall show that $A=P'BQ'$.

Let $i, j \in \lbrace 1, \dots, n \rbrace$.
By \eqref{eq_dag} we may choose $k$ and $l$ such that $A_{i,j}=P_{i,k}B_{k,l}Q_{l,j} \geq P_{i,h}B_{h,m}Q_{m,j}$ for all $h$ and $m$. Since $A_{i,j}$ is finite it follows that $P_{i,k}$ and $Q_{l,j}$ are also finite. Thus $P_{i,k}'=P_{i,k}$ and $Q_{l,j}'=Q_{l,j}$, giving $A_{i,j}=P_{i,k}'B_{k,l}Q_{l,j}'$. It then suffices to show that $(P'BQ')_{i,j} = P_{i,k}'B_{k,l}Q_{l,j}'$. Now
$$(P'BQ')_{i,j} = \bigoplus_{h=1}^{n} \bigoplus_{m=1}^n P_{i,h}'B_{h,m}Q_{m,j}',$$
so it will suffice to show that $P_{i,k}'B_{k,l}Q_{l,j}' \geq P_{i,h}'B_{h,m}Q_{m,j}'$ for all $h$ and $m$. There are four cases to consider:
\begin{itemize}
\item[(a)] If $P_{i,h}, Q_{m,j} \in \mathbb{FT}$ then
$$P_{i,h}'B_{h,m}Q_{m,j}' = P_{i,h}B_{h,m}Q_{m,j} \leq P_{i,k}B_{k,l}Q_{l,j} = P_{i,k}'B_{k,l}Q_{l,j}'.$$
\item[(b)] If $P_{i,h}\in \mathbb{FT}$ and $Q_{m,j}=-\infty$ then
\begin{eqnarray*}
P_{i,h}'B_{h,m}Q_{m,j}' &=& P_{i,h}+B_{h,m}+\delta\\
 &\leq& P_{i,h}+B_{h,m} + P_{i,k} +B_{k,l}+Q_{l,j}-B_{h,m}-P_{i,h}\\
 &\leq& P_{i,k} +B_{k,l}+Q_{l,j}= P_{i,k}B_{k,l}Q_{l,j} = P_{i,k}'B_{k,l}Q_{l,j}' ,
\end{eqnarray*}
by (1).
\item[(c)] If $P_{i,h}=-\infty$ and $Q_{m,j}\in \mathbb{FT}$ then we may
apply an argument dual to that in case (b), using condition (2) in place
of condition (1).

\item[(d)] If $P_{i,h}=-\infty$ and $Q_{m,j}=-\infty$ then
\begin{eqnarray*}
P_{i,h}'B_{h,m}Q_{m,j}' &=& \delta+B_{h,m}+\delta\\
 &\leq& P_{i,k}+B_{k,l} + Q_{l,j} -B_{h,m}+B_{h,m}\\
 &\leq& P_{i,k} +B_{k,l}+Q_{l,j}= P_{i,k}B_{k,l}Q_{l,j} = P_{i,k}'B_{k,l}Q_{l,j}' ,
\end{eqnarray*}
by (3).
\end{itemize}
Thus we see that $(P'BQ')_{i,j}=A_{i,j}$ for all $i$ and $j$.

(ii) It is clear that if $A \OrdJ B$ in $M_n(\mathbb{T})$
then $A \OrdJ B$ in $M_n(\overline{\mathbb{T}})$. Suppose
now that $A \OrdJ B$ in $M_n(\overline{\mathbb{T}})$. Thus
there exist $P,Q \in M_n(\overline{\mathbb{T}})$ such that $A=PBQ$. Let
$P', Q'$ be the matrices obtained from $P$ and $Q$ respectively by
replacing each $+\infty$ entry by $0$. Then it is straightforward to
check, by an argument similar to the above, that $A=P'BQ'$.
\end{proof}

\section{Characterising the $\mathcal{J}$-order}

In this section we shall give an exact characterisation of the
$\mathcal{J}$-order, and hence also of the $\mathcal{J}$-relation,
in terms of linear morphisms between column spaces (or dually, row spaces). As
discussed in the previous section, we restrict our attention to the
semirings $\mathbb{FT}$ and $\overline{\mathbb{T}}$, enabling us to make
use of the duality maps and the Metric Duality Theorem
(Theorem~\ref{dualmet} above).
Since the $\mathcal{J}$-relation in $M_n(\mathbb{T})$ is the restriction
of the corresponding relation in $M_n(\overline{\mathbb{T}})$, this also
gives a complete characterisation of $\mathcal{J}$.

We first recall the following result from \cite{K_tropd}.
\begin{theorem}
\label{useful}
Let $A, B \in M_n(S)$ for $S \in \{\mathbb{FT},\overline{\mathbb{T}}\}$. Then the following are equivalent:
\begin{itemize}
\item[(i)] $R_S(A) \subseteq R_S(B)$;
\item[(ii)] there is a linear morphism from $C_S(B)$ to $C_S(A)$ taking the $i$th column of $B$ to the $i$th column of $A$ for all $i$;
\item[(iii)] there is a surjective linear morphism from $C_S(B)$ to $C_S(A)$ taking the $i$th column of $B$ to the $i$th column of $A$ for all $i$.
\end{itemize}
\end{theorem}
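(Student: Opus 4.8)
The plan is to prove the implications $\text{(i)}\Rightarrow\text{(iii)}\Rightarrow\text{(ii)}\Rightarrow\text{(i)}$, since the content lies almost entirely in the last step. The bridge I would use throughout is the translation of (i) into a divisibility statement: by the definition of $\OrdL$ and Theorem~\ref{greenchar}(i), $R_S(A)\subseteq R_S(B)$ holds if and only if there is a matrix $C\in M_n(S)$ with $A=CB$, obtained by collecting the coefficients that express each row of $A$ as a tropical combination of the rows of $B$. Granting this, $\text{(i)}\Rightarrow\text{(iii)}$ is quick: the map $v\mapsto C\otimes v$ is linear, sends $B_{\cdot,j}$ to $(CB)_{\cdot,j}=A_{\cdot,j}$, carries any $\bigoplus_j\lambda_j B_{\cdot,j}$ to $\bigoplus_j\lambda_j A_{\cdot,j}\in C_S(A)$, and is surjective because its image is a submodule containing every column of $A$. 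The implication $\text{(iii)}\Rightarrow\text{(ii)}$ is trivial, and I would remark that conversely any $f$ as in (ii) is automatically surjective, since $f(C_S(B))$ is a submodule containing each $A_{\cdot,j}$ and hence all of $C_S(A)$.

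For $\text{(ii)}\Rightarrow\text{(i)}$ I would first reformulate the existence of $f$. Writing a general element of $C_S(B)$ as $B\otimes\lambda$ for a coefficient vector $\lambda\in S^n$, a well-defined linear map with $f(B_{\cdot,j})=A_{\cdot,j}$ exists exactly when $B\otimes\lambda=B\otimes\mu$ forces $A\otimes\lambda=A\otimes\mu$; that is, $A$ factors through the quotient map $\lambda\mapsto B\otimes\lambda$. The task is to upgrade this abstract factoring map to an actual matrix $C$ with $A=CB$. The natural candidate is the residual $\hat C_{i,k}=\langle B_{k,\cdot}\mid A_{i,\cdot}\rangle=\min_j(A_{i,j}-B_{k,j})$, the largest matrix satisfying $\hat C B\le A$; here I would invoke the residuation remarks of Section~\ref{sec_prelim}, which also guarantee $\hat C\in M_n(\ft)$ whenever $A,B\in M_n(\ft)$. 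Everything then reduces to proving that, under the hypothesis, the inequality $\hat C B\le A$ is in fact an equality, equivalently that each row $A_{i,\cdot}$ lies in $R_S(B)$.

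This equality is the main obstacle, and it is a genuine row--column duality: the hypothesis constrains the \emph{columns} of $A$ (through the congruence of $B$), whereas the residual speaks about the \emph{rows}. I would bridge the two using the projection identity $B_{\cdot,m}=\bigoplus_j\langle B_{\cdot,j}\mid B_{\cdot,m}\rangle\,B_{\cdot,j}$, valid for each column of $B$; applying the well-defined map $f$ transfers every such relation to $A$, yielding $A=AG$ and $B=BG$ for the common matrix $G_{j,m}=\langle B_{\cdot,j}\mid B_{\cdot,m}\rangle$, and it is the reconciliation of these column-side identities with the row-side residual $\hat C$ that forces $\hat C B=A$; since $\hat C$ is finite whenever $A$ and $B$ are, the same conclusion then holds over $\ft$. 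The conceptual reason the reconciliation must succeed is precisely the duality between row and column space recorded in Lemma~\ref{dualprop} and Theorem~\ref{dualmet}, which holds uniformly for $S\in\{\ft,\olt\}$; alternatively, over $\olt$ one can exploit completeness to extend each coordinate functional of $f$, being join-preserving on the submodule $C_S(B)\subseteq\olt^n$, to a join-preserving functional on all of $\olt^n$, which is necessarily given by a row vector, and then assemble these rows into a matrix $C$ with $CB=A$. I expect the delicate points to be this duality/extension step and the careful handling of the $\pm\infty$ conventions in the residual.
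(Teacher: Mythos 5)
You should first note that the paper contains no proof of Theorem~\ref{useful} to compare against: it is explicitly recalled from \cite{K_tropd}, where it is established via the duality machinery behind Lemma~\ref{dualprop}. Measured against the statement itself, your implications (i)$\Rightarrow$(iii)$\Rightarrow$(ii), and your observation that any $f$ as in (ii) is automatically surjective, are correct (using Theorem~\ref{greenchar}(i) to turn (i) into $A=CB$). The genuine gap is in (ii)$\Rightarrow$(i), which is the entire content of the theorem. The relations you actually transfer through $f$, namely the projection identities $B_{\cdot,m}=\bigoplus_j G_{j,m}B_{\cdot,j}$ with $G_{j,m}=\langle B_{\cdot,j}\mid B_{\cdot,m}\rangle$, yield only $A=A\otimes G$, and this is strictly weaker than $R_S(A)\subseteq R_S(B)$. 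Since $G_{m,m}=0$, one has $x\otimes G\geq x$ for every row vector $x$, so $x\otimes G=x$ holds precisely when $x_j\otimes G_{j,m}\leq x_m$ for all $j,m$; this fixed set is cut out by difference constraints (in particular it is closed under componentwise minimum) and in general strictly contains $R_S(B)$. Concretely, take $B\in M_3(\ft)$ with rows $(0,0,0)$, $(0,1,3)$, $(0,1,3)$; then $x=(0,1,1)$ satisfies $x\otimes G=x$ but $x\notin R_\ft(B)$, so the matrix $A$ all of whose rows equal $x$ satisfies $A=AG$ and $B=BG$ while $R_\ft(A)\not\subseteq R_\ft(B)$. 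Hence your sentence ``it is the reconciliation of these column-side identities with the row-side residual $\hat C$ that forces $\hat CB=A$'' asserts something that does not follow from what you have derived: one must exploit the full congruence hypothesis ($B\otimes\lambda=B\otimes\mu$ implies $A\otimes\lambda=A\otimes\mu$ for \emph{all} coefficient vectors), not just the finitely many relations encoded in $G$, and the appeal to Lemma~\ref{dualprop} and Theorem~\ref{dualmet} as ``the conceptual reason'' is not an argument.

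Your fallback over $\olt$ has the same problem in disguise. The claim that each coordinate functional $f_i$ of $f$, which is join- and scaling-preserving on $C_\olt(B)$, extends to such a functional on all of $\olt^n$ (and is therefore given by a row vector $c$ with $c\otimes B=A_{i,\cdot}$) is, applied to the functional determined by $B_{\cdot,j}\mapsto A_{i,j}$, exactly the assertion $A_{i,\cdot}\in R_\olt(B)$ that you are trying to prove; stating it without proof is circular. It is true, but it requires real work: the natural candidate extension $v\mapsto f_i(P_B(v))$, where $P_B(v)=B\otimes w$ with $w_k=\langle B_{\cdot,k}\mid v\rangle$ is the canonical projection onto $C_\olt(B)$, is monotone and preserves finite scaling but does \emph{not} preserve joins in general (already when $C_\olt(B)$ is the diagonal of $\olt^2$ it returns $\min(v_1,v_2)$), so ``necessarily given by a row vector'' is precisely the hard step that \cite{K_tropd} (via the duality maps) or \cite{Cohen04,Gaubert96b} supply. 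Finally, your one-line reduction of $\ft$ to $\olt$ (``since $\hat C$ is finite\dots'') is also incomplete: finiteness of the residual does not show that an $\ft$-linear map as in (ii) induces the congruence over $\olt$; one must separately handle coefficient vectors with entries $\pm\infty$, for instance by truncating $-\infty$ coefficients at a sufficiently negative real in the spirit of the proof of Proposition~\ref{jinherit}.
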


We remark that Theorem~\ref{useful} has a left-right dual, obtained by
swapping rows with columns and row spaces with column spaces throughout
the statement.
Theorem~\ref{useful} describes a duality between embeddings of row spaces and surjections
of column spaces, which also has an algebraic manifestation:

\begin{theorem}
\label{embedsurject}
Let $S \in \{\mathbb{FT},\overline{\mathbb{T}}\}$ and $A, B \in M_n(S)$.
Then the following are equivalent:
\begin{itemize}
\item[(i)] $C_S(B)$ surjects linearly onto $C_S(A)$;
\item[(ii)] $R_S(A)$ embeds linearly into $R_S(B)$;
\item[(iii)] there exists $C \in M_n(S)$ with $A \RelR C \OrdL B$.
\end{itemize}
\end{theorem}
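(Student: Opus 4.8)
The plan is to prove the cyclic chain of implications $(i) \Rightarrow (ii) \Rightarrow (iii) \Rightarrow (i)$, leaning heavily on Theorem~\ref{useful} and its left-right dual to handle the equivalence of the geometric statements, and reserving the genuinely new content for the passage to the algebraic statement~$(iii)$.

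First I would establish the equivalence of $(i)$ and $(ii)$ almost entirely from Theorem~\ref{useful}. Condition $(ii)$ asks for a linear embedding of $R_S(A)$ into $R_S(B)$; but by the left-right dual of Theorem~\ref{useful}, an \emph{inclusion} $C_S(A) \subseteq C_S(B)$ is equivalent to a linear surjection from $R_S(B)$ onto $R_S(A)$, and dually a row-space inclusion corresponds to a column-space surjection. The subtlety is that $(i)$ and $(ii)$ speak of surjections and embeddings between spaces that need not be related by inclusion, so I would first produce an intermediate matrix. Given a linear embedding $f\colon R_S(A) \hookrightarrow R_S(B)$, I would let $C$ be the matrix whose $i$th row is $f(\text{row}_i(A))$; then $R_S(C) = f(R_S(A)) \subseteq R_S(B)$, so $C \OrdL B$ by Theorem~\ref{greenchar}(i), while $f$ restricts to a linear isomorphism $R_S(A) \to R_S(C)$. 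By Theorem~\ref{greenchar}(vii) this makes $A \RelD C$, but I actually want $A \RelR C$, which requires $C_S(A) = C_S(C)$ rather than mere isomorphism of row spaces. This is precisely the point where the construction of $C$ must be made carefully, and it is where I expect the main obstacle to lie.

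The hard part will be arranging that the intermediate matrix $C$ satisfies $A \RelR C$ on the nose, not merely $A \RelD C$. My strategy here is to use the duality maps $\theta_A, \theta'_A$ of Lemma~\ref{dualprop} to transport the embedding of row spaces into a statement about column spaces. A linear embedding $R_S(A) \hookrightarrow R_S(B)$, together with the order-reversing scaling-preserving bijections between row and column spaces, should yield a linear surjection $C_S(B) \twoheadrightarrow C_S(A)$; conversely Theorem~\ref{useful} already packages the inclusion case. The precise bookkeeping is that I want to realise the embedding of $R_S(A)$ as $R_S(C)$ for a $C$ with the \emph{same column space} as $A$. Concretely, I would take the linear surjection $g\colon C_S(B) \to C_S(A)$ guaranteed by $(i)$, define $C$ so that its columns are $g$ applied to the columns of $B$ (so $C \OrdL B$ follows from Theorem~\ref{useful}(ii) applied in the appropriate direction), and then check that surjectivity of $g$ forces $C_S(C) = C_S(A)$, i.e.\ $A \RelR C$. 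Verifying that equality of column spaces — as opposed to containment — is exactly what surjectivity buys, is the crux of the argument.

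Finally the implication $(iii) \Rightarrow (i)$ is the routine direction. Given $C$ with $A \RelR C \OrdL B$, Theorem~\ref{greenchar}(iv) gives $C_S(A) = C_S(C)$ and Theorem~\ref{greenchar}(i) gives $R_S(C) \subseteq R_S(B)$. Applying Theorem~\ref{useful} to the inclusion $R_S(C) \subseteq R_S(B)$ produces a surjective linear morphism $C_S(B) \twoheadrightarrow C_S(C) = C_S(A)$, which is exactly~$(i)$. Symmetrically, $(iii) \Rightarrow (ii)$ follows by feeding $C_S(A) = C_S(C)$ through the identification and reading off the row-space embedding from $R_S(C) \subseteq R_S(B)$. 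Throughout I would be careful that all maps are linear in the $S$-module sense of Section~\ref{sec_prelim}, and that the constructions respect scaling so that the morphisms descend correctly; but no new geometry beyond Theorem~\ref{useful} and the duality maps should be needed once the intermediate matrix is built correctly.
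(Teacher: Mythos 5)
Your steps (i)$\Rightarrow$(iii) (applying the surjection $g$ to the columns of $B$, with surjectivity forcing $C_S(C)=C_S(A)$ and Theorem~\ref{useful} giving $C \OrdL B$) and (iii)$\Rightarrow$(i),(ii) coincide with the paper's proof. The genuine gap is the arc leading out of (ii), which your cycle requires but which you never validly establish. The duality-map strategy you propose for it does not work as stated: $\theta_A$ and $\theta'_A$ are order-reversing bijections, not linear maps, and to reverse the direction of a non-bijective embedding $f\colon R_S(A)\hookrightarrow R_S(B)$ you would have to pass to its residual, which preserves greatest lower bounds rather than $\oplus$; the paper warns about exactly this issue in the remark following Theorem~\ref{jnecessary} --- composites of inclusions with the duality ``anti-isomorphisms'' are order-morphisms, and order-morphisms of semilattices need not preserve $\oplus$. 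Worse, the ``concrete'' construction you then offer takes as its input the surjection $g$ of (i), so within your intended cycle (i)$\Rightarrow$(ii)$\Rightarrow$(iii)$\Rightarrow$(i) it merely reproves (i)$\Rightarrow$(iii); hypothesis (ii) is never actually used, and the argument is circular at that node. (The same caveat applies to ``the identification'' in your (iii)$\Rightarrow$(ii): it must be Theorem~\ref{greenchar}(vii) applied to $A \RelR C$, hence $A \RelD C$, as you did earlier --- not the duality maps.)

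The repair is much easier than the obstacle you anticipate, and your own first construction nearly finishes it: you do not need $A \RelR C$ on the nose. From the embedding $f$ you built $C$ with $R_S(C)=f(R_S(A))$, giving $A \RelD C$ by Theorem~\ref{greenchar}(vii) and $C \OrdL B$. But by the very definition of $\GreenD$ there exists $D \in M_n(S)$ with $A \RelR D \RelL C$, and then $D \RelL C \OrdL B$ gives $A \RelR D \OrdL B$: this is (iii) verbatim, with $D$ as the witness. The paper instead proves (ii)$\Rightarrow$(i) directly: build $A'$ by applying $f$ to the rows of $A$, obtain a linear surjection from $C_S(B)$ onto $C_S(A')$ via Theorem~\ref{useful}, and then use Theorem~\ref{greenchar}(vi) and (vii) --- isomorphism of row spaces is equivalent to isomorphism of column spaces --- to replace $C_S(A')$ by the isomorphic $C_S(A)$ and compose. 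Either way, the essential input at this point is the $\GreenD$-theorem of \cite{K_tropd} packaged in Theorem~\ref{greenchar}(vi),(vii), not the duality maps; once it is invoked, the ``main obstacle'' you flag dissolves.
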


\begin{proof}
We prove first that (i) implies (iii). Suppose that
$f : C_S(B) \to C_S(A)$
is a linear surjection. Let $C$ be the matrix obtained by applying $f$ to each
column of $B$. Then clearly $C_S(C) = C_S(A)$ so $C \RelR A$ by
Theorem~\ref{greenchar}(iv). Moreover, by
Theorem~\ref{useful} and the definition of $C$ we have $R_S(C) \subseteq R_S(B)$, so that by
Theorem~\ref{greenchar}(i) we have $C \OrdL B$.

Next we show that (iii) implies (ii). Since $A \RelR C$, in particular
$A \RelD C$, so Theorem~\ref{greenchar}(vii)
tells us that there is a linear isomorphism from $R_S(A)$ to $R_S(C)$.
Also, since $C \OrdL B$,
Theorem~\ref{greenchar}(i) gives that $R_S(C)$ is contained in $R_S(B)$.
Thus, the isomorphism gives a linear embedding of $R_S(A)$ into $R_S(B)$.

Finally, suppose (ii) holds, say
$f: R_S(A) \to R_S(B)$ is a linear embedding. Let $A'$ be
obtained from $A$ by applying $f$ to each row of $A$. Then
$R_S(A)$ is linearly isomorphic to $R_S(A')$, which is contained in
$R_S(B)$. By
Theorem \ref{useful}, it follows from the latter that there is a
linear surjection from
$C_S(B)$ onto $C_S(A')$. Moreover, since $R_S(A)$ and $R_S(A')$ are isomorphic
as $S$-modules, Theorem \ref{greenchar} parts (vi) and (vii) give that
$C_S(A)$ and $C_S(A')$ are isomorphic as $S$-modules. Composing gives a
linear surjection from $C_S(B)$ onto $C_S(A)$.
\end{proof}

We remark that the equivalence of conditions (i) and (ii) in
Theorem~\ref{embedsurject} is a manifestation of a more general
abstract 
categorical duality in residuation theory (see for example \cite{Gaubert96b}).
Again, the theorem has a left-right dual, obtained by
interchanging row spaces with column spaces, $\GreenL$ with $\GreenR$, and
$\leq_\GreenL$ with $\leq_\GreenR$. Theorem~\ref{embedsurject} and its dual lead easily to the main result of this
section.

\begin{theorem}{\rm (Linear characterisation of the $\mathcal{J}$-order.)} \\
\label{jcharrow}
Let $A, B \in M_n(S)$ for $S \in \{\mathbb{FT},\overline{\mathbb{T}}\}$. The following are equivalent.
\begin{itemize}
\item[(i)] $A \OrdJ B$;
\item[(ii)] there exists a convex set $Y \subseteq S^n$ such that $R_S(A)$ embeds linearly into $Y$ and $R_S(B)$ surjects linearly onto $Y$.
\item[(iii)] there exists a convex set $Y \subseteq S^n$ such that $R_S(A) \subseteq Y$ and $R_S(B)$ surjects linearly onto $Y$.
\item[(iv)] there exists a convex set $Y \subseteq S^n$ such that $C_S(A)$ embeds linearly into $Y$ and $C_S(B)$ surjects linearly onto $Y$.
\item[(v)] there exists a convex set $Y \subseteq S^n$ such that $C_S(A) \subseteq Y$ and  $C_S(B)$ surjects linearly onto $Y$.
\end{itemize}
\end{theorem}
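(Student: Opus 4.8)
The plan is to exploit the fact that the $\GreenJ$-order factors through the $\leq_\GreenR$ and $\leq_\GreenL$ pre-orders, together with Theorem~\ref{embedsurject} and its left-right dual. The key observation is that $A \OrdJ B$ holds exactly when there is a matrix $C$ with $A \OrdR C \OrdL B$ (or equivalently $A \OrdL C \OrdR B$); this is the standard factorisation $S^1 A S^1 \subseteq S^1 B S^1$ into a left multiplication followed by a right multiplication. Concretely, $A = PBQ$ means $A \OrdL BQ \OrdR B$, and we can read off an intermediate matrix. So first I would establish the semigroup-theoretic lemma that $A \OrdJ B$ if and only if there exists $C \in M_n(S)$ with $A \OrdR C \OrdL B$, which holds in any semigroup and is immediate from the definitions.

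With this in hand, the main work is to connect the intermediate matrix $C$ to the convex set $Y$ in conditions (ii)--(v). Suppose $A \OrdR C \OrdL B$. By Theorem~\ref{greenchar}(i), $C \OrdL B$ gives $R_S(C) \subseteq R_S(B)$. I would set $Y = R_S(C)$, so that $R_S(B) \supseteq Y$; to get that $R_S(B)$ \emph{surjects} onto $Y$, I would use the inclusion together with Theorem~\ref{useful} or simply the fact that a containment of convex sets need not give a surjection directly---this is where care is needed. The cleaner route is: from $A \OrdR C$ we get $C_S(A) \subseteq C_S(C)$ by Theorem~\ref{greenchar}(iii), and hence by the dual of Theorem~\ref{embedsurject} (interchanging rows and columns) the relation $C_S(A) \subseteq C_S(C)$ yields that $R_S(C)$ surjects linearly onto $R_S(A)$, i.e.\ $R_S(A)$ embeds into $Y = R_S(C)$ when combined appropriately. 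Meanwhile $R_S(C) \subseteq R_S(B)$ shows $R_S(B)$ surjects onto $Y$ via Theorem~\ref{embedsurject}'s equivalence of (i) and (ii) applied to the inclusion. Thus (i) implies (ii), and (iii) follows by replacing the embedding $R_S(A) \hookrightarrow Y$ with the isomorphic image, which lies inside $Y$, using Theorem~\ref{greenchar}(ii).

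For the converse direction, given $Y$ as in (ii) with $R_S(A)$ embedding into $Y$ and $R_S(B)$ surjecting onto $Y$, I would realise $Y$ (up to isomorphism) as a row space. The subtlety is that an abstract convex set $Y \subseteq S^n$ is already of the form $R_S(C)$ for some $C \in M_n(S)$ only if it is finitely generated of the right dimension; here $Y \subseteq S^n$, but I must produce an honest matrix. The device is to use the surjection $R_S(B) \twoheadrightarrow Y$: applying it to the rows of $B$ produces a matrix $C$ with $R_S(C) = Y$ (possibly after noting $Y$ is generated by the images of finitely many rows), so $Y$ is genuinely a row space. Then $R_S(A) \hookrightarrow Y = R_S(C)$ gives $A \OrdL C$ by Theorem~\ref{embedsurject} and Theorem~\ref{greenchar}(i), while the surjection $R_S(B) \twoheadrightarrow R_S(C)$ gives $A \OrdR C$ via the dual embedsurject relation on column spaces; chaining gives $A \OrdJ B$. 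The equivalences (iv) and (v) are the exact left-right duals of (ii) and (iii), so I would simply invoke the dual statements of Theorems~\ref{useful} and~\ref{embedsurject} rather than reproving anything.

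The main obstacle I anticipate is the bookkeeping in passing between embeddings of row spaces and surjections of column spaces: Theorem~\ref{embedsurject} gives these equivalences, but applying them in the right order---and making sure the intermediate object $Y$ is realised as a genuine row (or column) space of an $n\times n$ matrix so that Green's relations apply---requires threading the surjection and embedding through the correct one of the two dual forms. The cleanest presentation will likely prove (i)$\Leftrightarrow$(ii)$\Leftrightarrow$(iii) in full, then dispatch (iv) and (v) in one sentence by duality, rather than attempting a single cyclic chain through all five conditions.
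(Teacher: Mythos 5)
Your overall architecture is the paper's: factor $A \OrdJ B$ through an intermediate matrix, convert between row-space embeddings and column-space surjections via Theorems~\ref{useful} and~\ref{embedsurject}, realise $Y$ as a genuine row space using images of the rows of $B$, and dispatch (iv) and (v) by duality. But the forward direction as written contains two steps that fail, both caused by choosing the decomposition $A \OrdR C \OrdL B$, which is the one adapted to the \emph{column}-space conditions (iv)/(v); for the row-space conditions (ii)/(iii) you need $A \OrdL BQ \OrdR B$, which you in fact wrote down in your first paragraph and then abandoned. First, the inclusion $R_S(C) \subseteq R_S(B)$ does not yield a linear surjection $R_S(B) \twoheadrightarrow R_S(C)$ ``via Theorem~\ref{embedsurject}'s equivalence of (i) and (ii)'': that equivalence converts a row-space embedding into a surjection of the \emph{column} spaces in the opposite direction, and containment of row spaces genuinely does not imply a linear surjection from the larger onto the smaller---Example~\ref{ex_embednotnec} (transposed) is a counterexample, since there the contained space has strictly larger generator dimension, and generator dimension cannot increase under a linear surjection. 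Second, ``$R_S(C)$ surjects linearly onto $R_S(A)$, i.e.\ $R_S(A)$ embeds into $Y$'' is a non sequitur: by Theorem~\ref{embedsurject} and its dual, a row-space surjection is equivalent to a column-space embedding, not a row-space embedding; were the two interchangeable, embeddings alone would characterise the $\GreenJ$-order, which Section~\ref{sec_embednot} refutes. The repair is the paper's proof: with $Y = R_S(BQ)$, the relation $A \OrdL BQ$ gives $R_S(A) \subseteq Y$ outright, and $C_S(BQ) \subseteq C_S(B)$ gives a surjection $R_S(B) \twoheadrightarrow Y$ (sending $i$th row to $i$th row) by the dual of Theorem~\ref{useful}, establishing (i)$\Rightarrow$(iii) directly.

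In the converse direction your realisation of $Y$ as the row space of an honest matrix, by applying the surjection to the rows of $B$, is exactly the paper's move and is correct. However, ``$R_S(A) \hookrightarrow Y = R_S(C)$ gives $A \OrdL C$ by Theorem~\ref{embedsurject} and Theorem~\ref{greenchar}(i)'' is false: an abstract linear embedding is not an inclusion, and Theorem~\ref{greenchar}(i) requires actual containment---even a linear \emph{isomorphism} of row spaces witnesses only $\GreenD$ (Theorem~\ref{greenchar}(vii)), never $\OrdL$. What Theorem~\ref{embedsurject}(iii) actually delivers from the embedding is a matrix $D$ with $A \RelR D \OrdL F$, and its dual delivers a matrix $C$ with $F \RelL C \OrdR B$; chaining $A \RelR D \OrdL F \RelL C \OrdR B$ gives $A \OrdJ B$, which is how the paper concludes. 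So your plan coincides with the paper's and is recoverable, but as written both directions rest on identifications (inclusion with surjection of row spaces; surjection with embedding; embedding with containment) that Theorem~\ref{embedsurject} is precisely designed to keep distinct.
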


\begin{proof}
We show the equivalence of (i), (ii) and (iii), since the equivalence of
(i), (iv) and (v) is dual. Suppose that $A \OrdJ B$,
say $A=PBQ$ for some $P,Q \in M_n(S)$. Thus $A \OrdL BQ$ and
$BQ \OrdR B$ so that $R_S(A) \subseteq R_S(BQ)$ and
$C_S(BQ) \subseteq C_S(B)$ by Theorem~\ref{greenchar}. By the dual to
Theorem \ref{useful}, there exists a surjective linear map from $R_S(B)$
onto $R_S(BQ)$. Thus, setting $Y=R_S(BQ)$ yields (iii).

That (iii) implies (ii) is trivial, so it remains only to show that (ii)
implies (i). Let $Y$ be a convex set with the given properties. Since $Y$
is a morphic image of $R_S(B)$, it is generated by the images of the rows
of $B$. Thus, we may suppose
that $Y$ is the row space of some matrix, say $F \in M_n(S)$. Now by the dual
to Theorem~\ref{embedsurject},
there is a matrix $C$ with $F \RelL C \OrdR B$, and by
Theorem~\ref{embedsurject}, there is a matrix $D$ with $A \RelR D \OrdL F$.
Thus, $A \OrdJ B$, so (i) holds.
\end{proof}

In \cite{K_tropicalgreen} we saw that the $\mathcal{J}$-relation on the
semigroup of $2 \times 2$ tropical matrices was characterised by the notion
of mutual isometric embedding of projective column (dually, row) spaces. We
show now that isometric embedding of projective column (dually, row) spaces
is a necessary condition of the $\mathcal{J}$-order, although we shall
see later (Section~\ref{sec_embednot} below) that it is not a sufficient
condition. The proof is based on the Metric Duality Theorem from \cite{K_tropd}
(Theorem~\ref{dualmet} above).

\begin{theorem}
\label{jnecessary}
Let $A, B \in M_n(S)$ for $S \in \{\mathbb{FT},\overline{\mathbb{T}}\}$. If $A \leq_{\mathcal{J}} B$ then
\begin{itemize}
\item[(i)] $PC_S(A)$ embeds isometrically into $PC_S(B)$;
\item[(ii)] $PR_S(A)$ embeds isometrically into $PR_S(B)$.
\end{itemize}
\end{theorem}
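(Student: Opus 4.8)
The plan is to reduce the statement to the Metric Duality Theorem (Theorem~\ref{dualmet}) via the structural characterisation of the $\GreenJ$-order already established. I will prove part (i), since part (ii) follows by the left-right dual argument (interchanging row and column spaces throughout). The key observation is that an isometric embedding is in particular a metric embedding, and that isometries between projective column and projective row spaces are supplied ``for free'' by the duality maps.

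First I would unpack the hypothesis $A \OrdJ B$ using Theorem~\ref{embedsurject}: there exists $C \in M_n(S)$ with $A \RelR C \OrdL B$. (This is exactly condition (iii) of that theorem, and $A \OrdJ B$ yields it by the factorisation $A = PBQ$, setting $C = AQ^{\natural}$ or, more cleanly, by extracting the intermediate matrix as in the proof of Theorem~\ref{jcharrow}.) From $A \RelR C$ and Theorem~\ref{greenchar}(iv) we get $C_S(A) = C_S(C)$, hence $PC_S(A) = PC_S(C)$, so it suffices to embed $PC_S(C)$ isometrically into $PC_S(B)$. From $C \OrdL B$ and Theorem~\ref{greenchar}(i) we have $R_S(C) \subseteq R_S(B)$, and therefore (projectivising) $PR_S(C) \subseteq PR_S(B)$; this containment is trivially an isometric embedding since $d_H$ is computed pointwise from the scalar products and the subspace inherits the ambient metric.

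The heart of the argument is then to convert this isometric containment of projective \emph{row} spaces into an isometric embedding of projective \emph{column} spaces. Here I invoke the Metric Duality Theorem twice. For the matrix $B$, the duality map $\theta_B$ induces an isometry $PR_S(B) \to PC_S(B)$; for the matrix $C$, the map $\theta_C$ induces an isometry $PR_S(C) \to PC_S(C)$, with inverse $\theta'_C$. Composing, the map
$$
\theta_B \circ \iota \circ \theta'_C : PC_S(C) \longrightarrow PC_S(B),
$$
where $\iota : PR_S(C) \hookrightarrow PR_S(B)$ is the inclusion, is a composite of isometries and an isometric inclusion, hence itself an isometric embedding. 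Since $PC_S(A) = PC_S(C)$, this gives the desired isometric embedding of $PC_S(A)$ into $PC_S(B)$.

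The main subtlety to verify — and the step I would treat most carefully — is that the composite is genuinely well-defined and injective as a map of \emph{projective} spaces. The duality maps are defined on affine row and column spaces, but Theorem~\ref{dualmet} guarantees precisely that they descend to mutually inverse isometries of the projectivisations, so well-definedness on $P(\cdot)$ is already supplied; injectivity and the isometry property are automatic for a composite of isometries with an inclusion. One should also confirm that the containment $PR_S(C)\subseteq PR_S(B)$ really is isometric, i.e.\ that $d_H$ restricted to the subset agrees with the metric on the subset — but this is immediate because $d_H(x,y)$ depends only on $x$ and $y$ (through $\langle x\mid y\rangle$ and $\langle y\mid x\rangle$) and not on the ambient convex set. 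No other obstacle arises; the result is essentially a formal consequence of Theorems~\ref{embedsurject}, \ref{greenchar} and \ref{dualmet}.
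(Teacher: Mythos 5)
Your overall strategy---compose an inclusion with the duality isometries supplied by Theorem~\ref{dualmet}---is exactly the paper's, and your final composite $\theta_B \circ \iota \circ \theta'_C$ is precisely the map the paper constructs. But your first step is not just unjustified, it is false. From $A \OrdJ B$ you cannot extract a matrix $C$ with $A \RelR C \OrdL B$: by Theorem~\ref{embedsurject} the existence of such a $C$ is \emph{equivalent} to $R_S(A)$ embedding linearly into $R_S(B)$, and Example~\ref{ex_embednotnec} exhibits matrices in $M_4(\ft)$ with $A \OrdJ B$ for which the generator-dimension argument rules out any such embedding, hence any such $C$. Indeed, if your step were valid, Theorem~\ref{jnecessary} would follow from the claim that linear embedding of row spaces is necessary for the $\GreenJ$-order---the very claim Section~\ref{sec_embednot} is devoted to refuting. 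What the factorisation $A = PBQ$ actually gives is only the one-sided chain $A \OrdR PB \OrdL B$ (dually $A \OrdL BQ \OrdR B$), with the pre-order $\leq_\GreenR$ in place of $\GreenR$-equivalence; and extracting intermediate matrices as in the proof of Theorem~\ref{jcharrow} yields the longer chain $A \RelR D \OrdL C' \OrdR B$, again not of the form you need. (The suggestion $C = AQ^{\natural}$ is likewise unsupported: no residuated adjoint of $Q$ is defined here, and no choice of $C$ can work in the example just cited.)

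Fortunately the error is local and the repair leaves the rest of your argument intact: take $C = PB$, so that $A \OrdR C \OrdL B$, and replace your claimed equality $PC_S(A) = PC_S(C)$ by the inclusion $PC_S(A) \subseteq PC_S(C)$ coming from Theorem~\ref{greenchar}(iii). As you correctly observe for the row-space inclusion, such a containment is automatically isometric, because $d_H(x,y)$ is intrinsic to the pair of points (it is computed from $\langle x \mid y \rangle$ and $\langle y \mid x \rangle$ alone, independently of the ambient convex set). The restriction of $\theta_B \circ \iota \circ \theta'_C$ to $PC_S(A)$ is then an isometric embedding of $PC_S(A)$ into $PC_S(B)$, and this repaired argument coincides with the paper's proof, which runs $PC_S(A) \subseteq PC_S(PB)$, applies Theorem~\ref{dualmet} to $PB$ and to $B$, and composes the inclusions and isometries in the appropriate order. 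Your remarks about well-definedness on projectivisations (supplied by Theorem~\ref{dualmet}) and about the intrinsic nature of the metric are correct and carry over verbatim; note only that the paper never needs Theorem~\ref{embedsurject} for this result at all.
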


\begin{proof}
We show that (i) holds, with (ii) being dual.
Since $A \leq_{\mathcal{J}} B$ there exist $P, Q \in M_n(S)$ such that $A=PBQ$.
Now $A \leq_{\mathcal{R}} PB \leq_{\mathcal{L}} B$ so by
Theorem~\ref{greenchar} we have
$C_S(A) \subseteq C_S(PB)$ and $R_S(PB) \subseteq R_S(B)$. It follows that
$PC_S(A) \subseteq PC_S(PB)$ and $PR_S(PB) \subseteq PR_S(B)$. Now by
Theorem~\ref{dualmet}, $PC_S(PB)$ is isometric to $PR_S(PB)$, and
$PR_S(B)$ is isometric to $PC_S(B)$. By composing inclusions and
isometries in the appropriate order, we obtain an isometric embedding of
$PC_S(A)$ into $PC_S(B)$.
\end{proof}

We remark that there is no reason to believe that the isometric embedding
constructed in the proof of Theorem~\ref{jnecessary} is a linear morphism. The
inclusions are of course linear, and the isometries are ``anti-isomorphisms''
in the sense of \cite{K_tropd}, but this is not sufficient to ensure that
the composition is linear. The distinction here ultimately stems from the
distinction between a meet-semilattice morphism (which by definition preserves
greatest lower bounds) and an order-morphism of meet-semilattices (which
need not).

Theorem~\ref{embedsurject} (and also Theorem~\ref{jcharrow}) implies that
a \textit{linear} embedding of row spaces is a sufficient condition for
two matrices to be related in the $\GreenJ$-order, while Theorem~\ref{jnecessary} says that an
\textit{isometric} embedding of (projective) row spaces is a necessary
condition for the same property. It is very natural to ask, then, whether
the former condition is necessary, or the latter condition is sufficient. In
Section~\ref{sec_embednot} we shall give examples to show that, in general,
neither is the case.

\section{$\GreenD = \GreenJ$ in the finitary case}

One of the most fundamental structural questions about any semigroup is
whether the
relations $\GreenD$ and $\GreenJ$ coincide. These relations are always equal
in finite semigroups (more generally, in compact topological semigroups),
but differ in many important infinite semigroups. Semigroups in which
$\GreenD = \GreenJ$ have an ideal structure which is considerably easier
to analyse.

The full matrix semigroup $M_n(K)$ of matrices over a field $K$ is a
well-known example of
an (infinite, provided $K$ is infinite) semigroup in which
$\mathcal{D}=\mathcal{J}$. In \cite{K_tropicalgreen} we showed that
$M_2(\trop)$ also has $\GreenD = \GreenJ$, but Izhakian and Margolis
\cite{IzhakianPrivComm} have recently produced examples to show that
$\GreenD \neq \GreenJ$ in $M_3(\trop)$; it follows easily that $\GreenD \neq \GreenJ$
in $M_n(\trop)$ for all $n \geq 3$, and hence using
Proposition~\ref{jinherit} and Theorem~\ref{greeninherit} also in
$M_n(\olt)$ for $n \geq 3$. In contrast, in this section we shall show that the finitary
tropical matrix semigroup $M_n(\ft)$ satisfies $\GreenD = \GreenJ$ for
all $n$.

Our proof makes use of some topology.
For convenience, we identify $\pft^{n-1}$ with $\mathbb{R}^{n-1}$
via the correspondence
\begin{equation}
\label{project}
[(x_1, \dots, x_n)] \mapsto (x_1 - x_n, x_2 - x_n, \dots, x_{n-1} - x_n).
\end{equation}
With this identification, the Hilbert projective metric on $\pft^{n-1}$ is Lipschitz equivalent to
the standard Euclidean metric (which we will denote $d_E$) on $\mathbb{R}^{n-1}$. Indeed, it is an
easy exercise to verify that for any points $x, y \in \mathbb{R}^{n-1} = \pft^{n-1}$ we have
$$d_H(x,y) \leq \sqrt{2} d_E(x,y) \textrm{ and } d_E(x,y) \leq \sqrt{n-1} d_H(x,y).$$
In particular, the two metrics induce the same topology, so we may speak
without ambiguity of a sequence of points converging.

\begin{theorem}{\rm ($\GreenD = \GreenJ$ for finitary tropical matrices.)}\\
\label{finiteDJ}
Let $A,B \in M_n(\mathbb{FT})$. Then $A \RelJ B$ if and only if $A \RelD B$.
\end{theorem}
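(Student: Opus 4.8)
The plan is to prove the nontrivial implication $A \RelJ B \Rightarrow A \RelD B$; the converse holds in every semigroup, since $A \RelD B$ gives some $C$ with $A \RelR C \RelL B$ and both $\RelR$ and $\RelL$ are contained in the equivalence $\RelJ$. Throughout I work over $S=\mathbb{FT}$ and suppress it from the notation, writing $R(A), C(A), PR(A), PC(A)$ for the (projective) row and column spaces. The whole point of the topological set-up is that each projective row or column space is a tropical polytope, hence a \emph{compact} subset of $\pft^{n-1}\cong\mathbb{R}^{n-1}$ for the (Lipschitz-equivalent) Hilbert and Euclidean metrics.

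First I would unpack the two halves of $A \RelJ B$ into one-step factorisations. Writing $A=PBQ$ and setting $C_1=PB$, we have $A=C_1Q$ and $C_1=PB$, so $A \OrdR C_1 \OrdL B$; by Theorem~\ref{greenchar} this is the pair of containments $C(A)\subseteq C(C_1)$ and $R(C_1)\subseteq R(B)$. Symmetrically, from $B=P'AQ'$ and $C_2=P'A$ we obtain $C(B)\subseteq C(C_2)$ and $R(C_2)\subseteq R(A)$. Projectivising turns these four containments into four inclusions of compact subsets of $\pft^{n-1}$, each of which is an isometric embedding for the Hilbert projective metric, while the Metric Duality Theorem (Theorem~\ref{dualmet}) supplies isometries $PC(X)\cong PR(X)$ for every matrix $X$. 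This refines the reasoning behind Theorem~\ref{jnecessary}, but now keeping track of the auxiliary matrices $C_1,C_2$.

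The heart of the argument is to splice these inclusions and dualities into a single cyclic chain of isometric embeddings of compact metric spaces, namely
\[ PR(A) \cong PC(A) \hookrightarrow PC(C_1) \cong PR(C_1) \hookrightarrow PR(B) \cong PC(B) \hookrightarrow PC(C_2) \cong PR(C_2) \hookrightarrow PR(A), \]
whose overall composition is an isometric self-embedding of the compact space $PR(A)$. The topological input, and the step I expect to be the main obstacle to state cleanly, is the classical fact that an isometric self-embedding of a compact metric space is necessarily surjective. I would record a short lemma deducing from it that in a cyclic composition of isometric embeddings of compact spaces which is bijective, \emph{every} factor is bijective: each cyclic rotation of the loop is again an isometric self-embedding of a compact space, hence onto, and since its last map is the chosen factor, that factor is onto, and being an isometric embedding it is injective. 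In particular each of the four inclusions above is an equality.

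Finally I would read off the algebra. From $PC(A)=PC(C_1)$ I get $C(A)=C(C_1)$, since a scaling-closed convex set is determined by its projectivisation (as noted after the definition of projectivisation), so $A \RelR C_1$; from $PR(C_1)=PR(B)$ I get $R(C_1)=R(B)$, so $C_1 \RelL B$. With the single matrix $C_1$ satisfying $A \RelR C_1 \RelL B$, the definition of $\RelD$ yields $A \RelD B$. The essential subtlety to flag is that mutual isometric embedding of $PC(A)$ and $PC(B)$ on its own only shows these polytopes are isometric, which is strictly weaker than the module isomorphism required for $\RelD$; it is precisely by carrying the auxiliary matrices $C_1,C_2$ around the loop that the purely metric equalities are upgraded to the algebraic relations $\RelR$ and $\RelL$.
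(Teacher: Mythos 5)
Your proposal is correct and takes essentially the same route as the paper: both extract an intermediate matrix from each half of the $\GreenJ$-factorisation (the paper uses $X=BQ$ with $A \OrdL X \OrdR B$, you use $C_1=PB$ with $A \OrdR C_1 \OrdL B$, which are mirror-dual choices), splice the resulting inclusions with the Metric Duality Theorem (Theorem~\ref{dualmet}) into an isometric self-embedding of the compact space $PR_{\ft}(A)$, and conclude from the fact that an isometric self-embedding of a compact metric space must be surjective, reading the algebraic relations $\RelR$ and $\RelL$ off the resulting equalities of projective spaces exactly as the paper does. The only differences are presentational: you argue directly via your cyclic-rotation lemma (every factor of the bijective loop is bijective) and cite the compact-isometry fact as classical, whereas the paper frames the same content as a proof by contradiction and establishes that fact inline by producing an $\varepsilon$-separated orbit sequence $x_i=f^i(x_0)$ with no convergent subsequence.
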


\begin{proof}
Clearly, if $A$ and $B$ are $\mathcal{D}$-related then they are also
$\mathcal{J}$-related. Now suppose for a contradiction that $A \RelJ B$,
but $A$ is not $\mathcal{D}$-related to $B$. We claim first that there is an
isometric (with respect to the Hilbert projective metric) self-embedding
$f: PR_\ft(A) \rightarrow PR_\ft(A)$ which is not an isometry
(that is, which is not surjective).

To construct such such a map, we proceed much as in the proof of
Theorem~\ref{jnecessary}.
Since $A \leq_{\mathcal{J}} B$ we may write $A = PBQ$ for some $P, Q \in M_n(\ft)$.
Letting $X = BQ$, we have $A \OrdL X \OrdR B$, so
by Theorem~\ref{greenchar},
$$PR_\ft(A) \subseteq PR_\ft(X) \text{ and } PC_\ft(X) \subseteq PC_\ft(B).$$
Now by
Theorem~\ref{dualmet}, there is an isometry from $PR_\ft(X)$ to $PC_\ft(X)$
and an isometry from $PC_\ft(B)$ to $PR_\ft(B)$. Composing these inclusions and
isometries in the appropriate order, we obtain an isometric embedding
$$h : PR_\ft(A) \rightarrow PR_\ft(B).$$
We claim that $h$ is not surjective. Indeed, for $h$ to be surjective we
would clearly have to have $PC_\ft(X)=PC_\ft(B)$ and $PR_\ft(A)=PR_\ft(X)$,
which by Theorem~\ref{greenchar} would yield $A \RelL X \RelR B$
and hence $A \RelD B$, giving a contradiction.

Dually, since $B \leq_{\mathcal{J}} A$, we may construct a
non-surjective isometric embedding $g: PR_\ft(B) \rightarrow PR_\ft(A)$.
Now let $f : PR_\ft(A) \rightarrow PR_\ft(A)$ be the non-surjective isometric
self-embedding given by the composition $g \circ h$.

By \cite[Proposition~2.6]{Joswig05}, the projective row space $PR_\ft(A)$
is a compact subset of $\mathbb{PFT}^{n-1} = \mathbb{R}^{n-1}$.
We will use this fact to
deduce the desired contradiction. Let $X_0 = PR_\ft(A)$. Since $f$ is not a surjection
and has closed
image, we may choose an $x_0 \in X_0$ and $\varepsilon>0$ such that
$x_0 \notin f(X_0)$ and $d_H(x_0,y) \geq \varepsilon$ for all
$y \in f(X_0)$. Now we define a sequence of points by
$x_i=f^i(x_0)$, and a decreasing sequence of sets by
$X_i = f^i(X_0)$.

Notice that, since $f^i$ is an isometric embedding, it follows from the
properties of $x_0$ and $X_0$ that each $x_i$ lies in $X_i$
but satisfies $d_H(x_i, y) \geq \varepsilon$ for all $y \in X_{i+1}$. In
particular, whenever $j > i$ we have $x_j \in X_j \subseteq X_{i+1}$ so
that $d_H(x_i, x_j) \geq \varepsilon$. It follows that the the $x_i$'s cannot
contain a convergent subsequence, which contradicts the fact that they are
contained in the compact set $X_0 = PR_\ft(A)$.
\end{proof}

\section{Embeddings do not characterise the $\GreenJ$-order.}\label{sec_embednot}

We have already seen that, by Theorem~\ref{embedsurject},
a linear embedding of row spaces is a sufficient condition for
two matrices to be related in the $\GreenJ$-order, while, by Theorem~\ref{jnecessary}, an
isometric embedding of (projective) row spaces is a necessary
condition for the same property. It is very natural to ask, then, whether an
exact characterisation of the $\GreenJ$-order can be obtained in terms of
(linear or isometric) embeddings alone. In this section we answer this
question in the negative, by giving examples to show that linear embedding of
row spaces is not a necessary condition for two matrices to be related
in the $\GreenJ$-order, while isometric embedding is not a sufficient
condition. By Theorem~\ref{embedsurject} and the fact that the
$\GreenJ$-order is invariant under taking matrix transposes,
this also suffices to exclude linear or isometric embeddings of column
spaces, or linear surjections of row or column spaces as exact
characterisations of the $\GreenJ$-order.

To show that our examples have the claimed properties, we shall need
some more concepts and terminology.
Let $S \in \{\mathbb{FT}, \mathbb{T}, \overline{\mathbb{T}}\}$ and let $X$
be a finitely generated convex set in $S^n$. A set $\{x_1, \ldots, x_k\} \subseteq X$
is called a {\em weak basis} of $X$ if it is a generating set for $X$ minimal with respect to inclusion. It is known that every finitely generated convex set admits a weak basis, which is unique up to permutation
and scaling (see \cite[Theorem 1 and Corollary 3.6]{Wagneur91} for the case $S\in \{\trop, \olt\}$ and \cite{CuninghameGreen04} for the case $S=\ft$). In particular, any two weak bases have the same cardinality,
in view of which  we may define the \textit{generator dimension} of a finitely
generated convex set $X$ to be the cardinality of a weak basis for $X$, or
equivalently, the minimum cardinality of a generating set for $X$. Generator
dimension is closely related to the notion which was called
\textit{linear independence} in \cite[Chapter~16]{CuninghameGreen79}.

Note
that generator dimension is \textit{not} well-behaved with respect to
inclusion: a convex set of generator dimension $k$ may contain a convex set of
generator dimension strictly greater than $k$. In particular, for $n \geq 3$ the
generator dimension of a finitely generated convex set in $S^n$ can exceed
$n$.

However, it is easily seen that generator dimension is well-behaved with respect
to linear surjections: if $f : X \to Y$ is a linear surjection then the generator
dimension of $X$ is at least that of $Y$. Indeed, if $B$ is a weak
basis for $X$ then $f(B)$ is a generating set for $Y$, and so $Y$ has
generator dimension at most $|f(B)| \leq |B|$. In particular, generator dimension is an
isomorphism invariant.

\begin{example}\label{ex_embednotnec}
(Linear embedding is not necessary for the $\GreenJ$-order.) \\
Consider the matrices
$$A=\left(\begin{array}{c c c c}
0& 1& 2 &3\\
0& -1&-2&-3\\
0&0&0&0\\
0&0&0&0
\end{array}\right) \textrm{ and } B=\left(\begin{array}{c c c c}
0&0& 1 & 3\\
0&0& -2 & -3\\
0&0&0&0\\
0&0&0&0
\end{array}\right).$$
in $M_4(\ft)$. Then $A \leq_{\mathcal{J}} B$; indeed $A \OrdR B$
since $A=BX$ where
$$X=\left(\begin{array}{c c c c}
0& -1 & -2 &-3\\
0 & -1 & -2 &-3\\
-1 & 0 &0&-1\\
-3&-3&-1&0
\end{array}\right),
$$
for example. Thus $C_{\ft}(A) \subseteq C_{\ft}(B)$ by Theorem \ref{greenchar}(iii). Since every element of
$C_{\ft}(A)$ and $C_{\ft}(B)$ has the form $(a,b,c,c)^T$, the elements of the corresponding
projective column spaces all have the form $(a-c,b-c,0)^T$ (where, as before, we identify $\pft^3$ with $\mathbb{R}^3$ via the map given in \eqref{project}). Since the third co-ordinate is fixed we may therefore draw our projective column spaces in 2-dimensions, as in Figure \ref{zigzag} below.

It is easy to check that $C_{\ft}(A)$ has generator dimension $4$, while
$C_{\ft}(B)$ has generator dimension $3$ (the points labelled in Figure \ref{zigzag} are the projectivisations of a weak basis for $C_{\ft}(A)$ and
$C_{\ft}(B)$ respectively).
It follows by the above discussion that there cannot be a surjective linear
morphism from $C_{\ft}(B)$ onto $C_{\ft}(A)$, and hence by
Theorem~\ref{embedsurject}, $R_{\ft}(A)$ does not embed in $R_{\ft}(B)$.

\ifpics{

\begin{figure}[h]
\caption{The projective column spaces of the matrices $A$ and $B$ from Example~\ref{ex_embednotnec}.}
\begin{pspicture}(7,4)
\psframe[fillstyle=solid, fillcolor=lightgray, linecolor=lightgray](1,2)(3,3)
\psframe[fillstyle=solid, fillcolor=lightgray, linecolor=lightgray](2,3)(3,1)
\qline(0,3)(3,3)
\qline(1,2)(2,2)
\qline(2,1)(3,1)
\qline(1,3)(1,2)
\qline(2,2)(2,1)
\qline(3,3)(3,0)
\rput(2.1,2.3){$PC_{\ft}(A)$}
\rput(-0.6,3){$(0, 0)$}
\rput(0.25,2){$(1, -1)$}
\rput(1.25,1){$(2, -2)$}
\rput(2.25,0){$(3, -3)$}
\psframe[fillstyle=solid, fillcolor=lightgray, linecolor=lightgray](6,3)(8,1)
\qline(5,3)(8,3)
\qline(6,1)(8,1)
\qline(6,3)(6,1)
\qline(8,3)(8,0)
\rput(7.1,2){$PC_{\ft}(B)$}
\rput(4.4,3){$(0, 0)$}
\rput(5.25,1){$(1, -2)$}
\rput(7.25,0){$(3, -3)$}
\psdots*(0,3)(1,2)(2,1)(3,0)(5,3)(6,1)(8,0)
\end{pspicture}
\label{zigzag}
\end{figure}
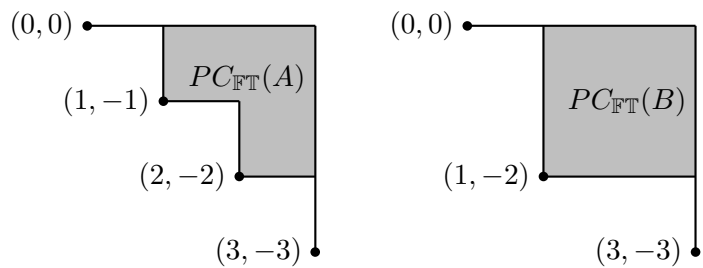
}

It is an easy exercise to extend the dimension of this example, so as
to show that linear embedding of row spaces is not necessary for the
$\GreenJ$-order in $M_n(\ft)$ for all $n \geq 4$ and hence by Proposition~\ref{jinherit}
also in $M_n(\trop)$ and $M_n(\olt)$ for $n \geq 4$. Using the results of \cite{K_tropicalgreen} it can be shown that in $M_2(\trop)$ (and hence also in $M_2(\ft)$, by Proposition~\ref{jinherit}), linear embedding of row spaces is an exact characterisation of the $\GreenJ$-order. The three dimensional case, that is, whether linear embedding of row spaces is an exact characterisation
of the $\GreenJ$-order in $M_3(\ft)$, $M_3(\trop)$ and/or $M_3(\olt)$ remains open.
\end{example}

Example~\ref{ex_embednotnec} shows that linear embedding does not (even in the finitary
case) characterise the $\GreenJ$-order, but it does not rule out the
possibility that \textit{mutual} linear embedding characterises the
$\GreenJ$-equivalence. Indeed, in the finitary case,
Theorem~\ref{greenchar}(vi) and Theorem~\ref{finiteDJ} together imply
that $\GreenJ$-related matrices have linearly isomorphic column spaces,
and hence in particular, mutually embedding column spaces. The following
example shows that this characterisation does not extend to the case of
$\trop$.

\begin{example}\label{ex_mutualembednotnec}
(Mutual linear embedding is not necessary for $\GreenJ$.) \\

Consider the matrices
$$A=\left(\begin{array}{c c c c}
-\infty& 0& 1 & 1\\
-\infty& -\infty&1& 1 \\
0&0&0&0\\
-\infty&-\infty&-\infty&-\infty
\end{array}\right), B=\left(\begin{array}{c c c c}
-\infty&0& 1 & 1\\
-\infty&-\infty& 1&0\\
0&0&0&0\\
-\infty&-\infty&-\infty&-\infty
\end{array}\right)$$
in $M_4(\trop)$. We first claim that $A \mathcal{J} B$. Let $\mu$ denote the linear
embedding $\mu: \mathbb{T}^4 \rightarrow \mathbb{T}^4$ given by
$\mu: (x,y,z,t) \mapsto (x,y,z+1,t)$. It is immediate that the restriction
of $\mu$ to $C_{\trop}(A)$ gives an embedding $\mu :C_{\trop}(A) \rightarrow C_{\trop}(B)$ and
straightforward to check that the restriction of $\mu$ to $C_{\trop}(B)$ gives an
embedding $\mu :C_{\trop}(B) \rightarrow C_{\trop}(A)$. Thus $\mu$ gives mutual linear
embeddings of the column spaces. Hence, by the dual to Theorem \ref{embedsurject}, we
deduce that $A \mathcal{J} B$. It is also easy to check that 
$C_{\trop}(A)$ has generator dimension $3$ while
$C_{\trop}(B)$ has generator dimension $4$. So by the
same argument as in Example~\ref{ex_embednotnec}, $C_{\trop}(A)$ cannot
surject onto $C_{\trop}(B)$, and hence $R_{\trop}(B)$ cannot embed in $R_{\trop}(A)$.

Every non-zero element of $C_{\trop}(A)$ and $C_{\trop}(B)$ has the form
$(a,b,c,-\infty)^T$, where $c \neq -\infty$. Thus, we may identity the
elements of the projectivisations with elements of the form
$(a-c,b-c,0, -\infty)^T$. Since the third and fourth co-ordinates are
fixed we may therefore draw our projective column spaces in two dimensions,
as in Figure \ref{inf} below. The projective row space of $A$ can be drawn
similarly, but that of $B$ is harder to illustrate in two dimensions.

\ifpics{

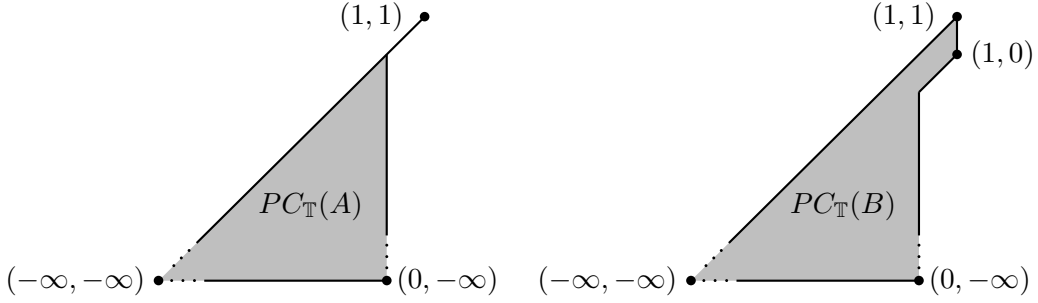
\begin{figure}[h]
\caption{The projective column spaces of the matrices $A$ and $B$ from Example~\ref{ex_mutualembednotnec}.}
\begin{pspicture}(11.2,4.5)
\pspolygon*[linecolor=lightgray](1,0)(4,0)(4,3)
\rput(3,1){$PC_{\trop}(A)$}
\rput(-0.1,0){$(-\infty, -\infty)$}
\rput(4.8,0){$(0, -\infty)$}
\rput(3.8,3.5){$(1, 1)$}
\qline(1.5,0.5)(4.5,3.5)
\qline(1.6,0)(4,0)
\qline(4,3)(4,0.6)
\pspolygon*[linecolor=lightgray](8,0)(11,0)(11,3)
\pspolygon*[linecolor=lightgray](11,3)(11.5,3.5)(11.5,3)(11,2.5)
\rput(10,1){$PC_{\trop}(B)$}
\rput(6.9,0){$(-\infty, -\infty)$}
\rput(11.8,0){$(0, -\infty)$}
\rput(10.8,3.5){$(1, 1)$}
\rput(12.1,3){$(1, 0)$}
\qline(8.5,0.5)(11.5,3.5)
\qline(11.5,3.5)(11.5,3)
\qline(11.5,3)(11,2.5)
\qline(11,2.5)(11,0.6)
\qline(8.6,0)(11,0)
\psdots*(1,0)(4,0)(4.5,3.5)(8,0)(11,0)(11.5,3.5)(11.5,3)
\psdots*[dotsize=1.1pt](1.3,0.3)(1.4,0.4)(1.2,0.2)(1.32,0)(1.47,0)(1.17,0)(4,0.32)(4,0.47)(4,0.17)
(8.3,0.3)(8.4,0.4)(8.2,0.2)(8.32,0)(8.47,0)(8.17,0)(11,0.32)(11,0.47)(11,0.17)
\end{pspicture}
\label{inf}
\end{figure}

}

Again, it is straightforward to extend this example to higher dimensions,
showing that mutual linear embedding of row spaces is not necessary for
two matrices to be $\GreenJ$-related in $M_n(\trop)$ for $n \geq 4$, and hence by
Proposition~\ref{jinherit} also in $M_n(\olt)$, for $n \geq 4$. In two
dimensions, we know from \cite{K_tropicalgreen} that $\GreenD = \GreenJ$
in $M_2(\trop)$, so by the same argument as in the finitary case we see that
mutual linear embedding exactly characterises $\GreenJ$. The three
dimensional case, that is, whether mutual linear embedding of row spaces
is necessary for $\GreenJ$-equivalence in $M_3(\trop)$ and/or $M_3(\olt)$,
remains open.
\end{example}

\begin{example}\label{ex_isometrynotsuff}
(Isometry is not sufficient for $\GreenJ$-equivalence.) \\

Consider the matrix
$$A=\left(\begin{array}{c c c c}
0& 0& 0\\
1& 5&0\\
3&2&0\\
\end{array}\right)$$
in $M_3(\mathbb{FT})$. By Theorem~\ref{dualmet} we have that $PR_{\ft}(A)$ is isometric to $PC_{\ft}(A)$, or equivalently, $PR_{\ft}(A)$ is isometric to $PR_{\ft}(A^T)$. As usual, we identify $\pft^2$ with $\mathbb{R}^2$ via the map given in \eqref{project}. Figure \ref{hex} illustrates these isometric row spaces. We claim that $A$ is not $\mathcal{J}$-related to $A^T$.

Since $A$ has only finite entries, by Theorem~\ref{finiteDJ} $A \RelJ A^T$
if and only if $A \RelD A^T$, which by Theorem~\ref{greenchar}(vii) holds
if and only if $R_\ft(A)$ and $R_\ft(A^T)$ are linearly isomorphic. Thus,
it will suffice to show that these spaces are not linearly isomorphic.

Suppose for a contradiction that $f : R_{\ft}(A) \rightarrow R_{\ft}(A^T)$ is an isomorphism
of $S$-modules. Then $f$ induces an isometry $\hat{f}$ between the projective
row space $PR_{\ft}(A)$ and the projective row space $PR_{\ft}(A^T)$ mapping each
$x \in PR_{\ft}(A)$ to the projectivisation of $f(x)$ in $PR_{\ft}(A^T)$.
Since $f$ is an isomorphism, it maps weak bases to weak bases. We fix a weak
basis for $R_{\ft}(A)$ and let $\{a,b,c\}$ denote
the projectivisation of this basis. Similarly, fix any weak basis for
$R_{\ft}(A^T)$ and let $\{x,y,z\}$ denote the projectivisation of this basis.
It then follows that
$$\{ d_H(a,b), d_H(a,c), d_H(b,c)\} = \{ d_H(x,y), d_H(x,z), d_H(y,z)\}.$$
However, since the rows of $A$ form a weak basis of
$R_{\ft}(A)$ and the columns of $A$ form a weak basis of $R_{\ft}(A^T)$, we
find that
$$\{ d_H(a,b), d_H(a,c), d_H(b,c)\} = \{1,4,5\}$$ whilst
$$\{ d_H(x,y), d_H(x,z), d_H(y,z)\}=\{2,3,5\},$$
contradicting the existence of an isomorphism between $R_{\ft}(A)$ and $R_{\ft}(A^T)$.

\ifpics{

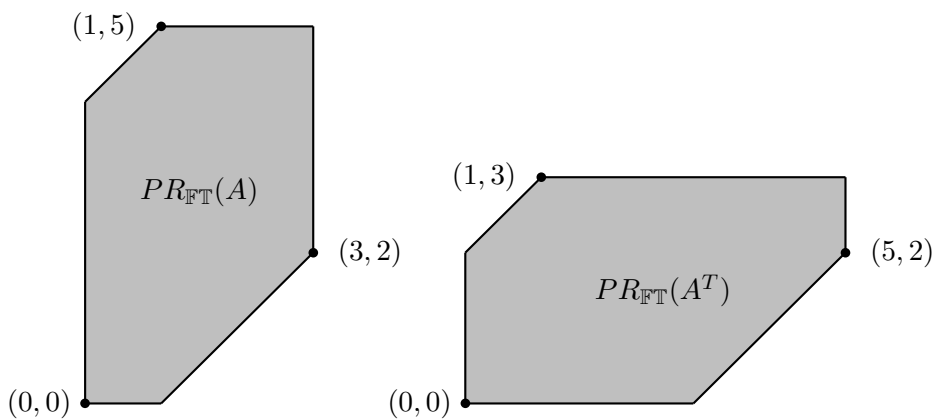
\begin{figure}[h]
\caption{The projective row spaces of the matrices $A$ and $A^T$ from Example~\ref{ex_isometrynotsuff}.}
\label{hex}
\begin{pspicture}(10,6)
\pspolygon*[linecolor=lightgray](0,0)(0,4)(1,5)(3,5)(3,2)(1,0)
\pspolygon*[linecolor=lightgray](5,0)(5,2)(6,3)(10,3)(10,2)(8,0)
\qline(0,0)(0,4)
\qline(0,4)(1,5)
\qline(1,5)(3,5)
\qline(3,5)(3,2)
\qline(3,2)(1,0)
\qline(1,0)(0,0)
\rput(1.5,2.8){$PR_{\ft}(A)$}
\rput(-0.6,0){$(0, 0)$}
\rput(0.25,5){$(1, 5)$}
\rput(3.75,2){$(3, 2)$}
\qline(5,0)(5,2)
\qline(5,2)(6,3)
\qline(6,3)(10,3)
\qline(10,3)(10,2)
\qline(10,2)(8,0)
\qline(8,0)(5,0)
\rput(7.6,1.5){$PR_{\ft}(A^T)$}
\rput(4.4,0){$(0, 0)$}
\rput(5.25,3){$(1, 3)$}
\rput(10.75,2){$(5, 2)$}
\psdots*(0,0)(1,5)(3,2)(5,0)(6,3)(10,2)
\end{pspicture}
\end{figure}

}

Again, this example extends easily to $n \geq 3$, showing that an isometry
between row spaces is not sufficient to imply that two matrices are
$\GreenJ$-related in $M_n(\ft)$ for all $n \geq 3$, and hence by
Proposition~\ref{jinherit} also in $M_n(\trop)$and $M_n(\olt)$. It also
follows, in all of these semigroups, that an isometric embedding of the
row space of $A$ into the row space of $B$ does not imply that
$A \OrdJ B$. In the case $n = 2$, it follows from results in
\cite{K_tropicalgreen} that isometry of row spaces does give an exact
characterisation of $\GreenJ$ in $M_2(\ft)$ and $M_2(\trop)$.
\end{example}

\section{$\mathcal{J}$ and the rank of a tropical matrix}

In the full matrix semigroup $M_n(K)$ of matrices over a field $K$, two matrices 
are $\mathcal{J}$-related (and hence also $\mathcal{D}$-related) if and 
only if they have the same \emph{rank}. In this classical situation there 
are several equivalent definitions of rank, stemming from the ideas of 
matrix factorisation, linear independence of rows or columns and 
singularity. Unfortunately, in the case of matrices over a semiring, these 
definitions cease to be equivalent. In this section we shall look at how 
$\mathcal{J}$-classes (and $\mathcal{D}$-classes) relate to various ideas 
of the rank of a tropical matrix. We shall show that many of the commonly 
studied ranks are $\mathcal{J}$-class invariants.

We begin by considering an arbitrary commutative semiring $S$.
We say that a function
$${\rm rank}: M_n(S) \rightarrow \mathbb{N}_0$$
is a \emph{rank function} on $M_n(S)$. We say that the 
function \textit{respects the $\GreenJ$-order} if $A \OrdJ B$ 
implies ${\rm rank}(A) \leq {\rm rank}(B)$. Clearly, any function which 
respects the $\GreenJ$-order is in particular a $\GreenJ$-class invariant, 
although the converse need not hold. We say that the function
\textit{satisfies the rank-product inequality} if and only if for all
$A, B \in M_n(S)$ we have
$${\rm rank}(AB) \leq {\rm min}({\rm rank}(A), {\rm rank}(B)).$$
The following elementary proposition observes that a rank function respects
the $\GreenJ$-order exactly if it satisfies the rank-product inequality.

\begin{proposition}\label{rankinequality} \label{rankprop}
Let $S$ be a semiring and ${\rm rank} : M_n(S) \to \mathbb{N}_0$ a rank function. Then the
function respects the $\GreenJ$-order if and only if
$${\rm rank}(AB) \leq {\rm min}({\rm rank}(A), {\rm rank}(B))$$
for all $A,B \in M_n(S)$.
\end{proposition}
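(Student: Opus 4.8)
The plan is to prove both implications directly from the definition of the $\GreenJ$-order, using the elementary observation that a product always lies below each of its factors in this pre-order.

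For the forward direction, suppose ${\rm rank}$ respects the $\GreenJ$-order. First I would record that for any $A, B \in M_n(S)$ one has $AB \OrdJ A$ and $AB \OrdJ B$. Indeed, $AB$ lies in $A\, M_n(S)^1 \subseteq M_n(S)^1\, A\, M_n(S)^1$, whence $M_n(S)^1 (AB) M_n(S)^1 \subseteq M_n(S)^1 A\, M_n(S)^1$, giving $AB \OrdJ A$; the symmetric computation with $B$ on the left gives $AB \OrdJ B$. Applying the hypothesis to these two relations yields ${\rm rank}(AB) \leq {\rm rank}(A)$ and ${\rm rank}(AB) \leq {\rm rank}(B)$, which is exactly the rank-product inequality.

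For the reverse direction, suppose the rank-product inequality holds and $A \OrdJ B$. By definition this means $A \in M_n(S)^1\, B\, M_n(S)^1$, so $A$ equals one of $B$, $PB$, $BQ$ or $PBQ$ for suitable $P, Q \in M_n(S)$. In each case one reads off ${\rm rank}(A) \leq {\rm rank}(B)$ directly: the rank-product inequality gives ${\rm rank}(XY) \leq {\rm rank}(X)$ and ${\rm rank}(XY) \leq {\rm rank}(Y)$, so the three nontrivial cases follow by a single application (associating $PBQ$ as $(PB)Q$ when two applications are needed), and the case $A = B$ is immediate. Hence ${\rm rank}(A) \leq {\rm rank}(B)$, as required.

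The only point needing a little care is the bookkeeping around $M_n(S)^1$: since $S$ need not possess an additive or multiplicative identity, $M_n(S)$ need not itself be a monoid, so the adjoined identity genuinely produces the cases $A = B$, $A = PB$ and $A = BQ$ alongside $A = PBQ$. These are handled uniformly and present no real obstacle; the entire content of the proposition is carried by the two one-line observations that $AB \OrdJ A,B$ and that each factorisation of $A$ through $B$ may be treated by the rank-product inequality.
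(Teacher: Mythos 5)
Your proof is correct and follows essentially the same route as the paper's: the easy direction via the observations $AB \OrdJ A$ and $AB \OrdJ B$, and the converse by writing $A = PBQ$ with $P,Q \in M_n(S)^1$ and applying the rank-product inequality case by case (the paper likewise notes that ``similar arguments apply when one or both of $P$ and $Q$ is an adjoined identity''). Your explicit attention to the $M_n(S)^1$ bookkeeping matches the paper's treatment, so there is nothing to add.
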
 

\begin{proof}
Suppose first that ${\rm rank}(AB) \leq {\rm min}({\rm rank}(A), {\rm rank}(B))$
for all $A$ and $B$. If $X \leq_{\mathcal{J}} Y$  then we may write $X=PYQ$,
where $P,Q \in M_n(S)^1$. If $P, Q \in M_n(S)$ then
\begin{eqnarray*}
{\rm rank}(X) &\leq& {\rm min}({\rm rank}(P), {\rm rank}(YQ))\\
 &\leq& {\rm min}({\rm rank}(P), {\rm rank}(Y), {\rm rank}(Q)) \leq {\rm rank}(Y).
\end{eqnarray*}
Similar arguments apply when one or both of $P$ and $Q$ is an adjoined
identity.

Conversely, suppose ${\rm rank}$ respects the $\GreenJ$-order. Then for 
$A, B \in M_n(S)$ we have $AB \OrdJ A$ and $AB \OrdJ B$, so 
${\rm rank}(AB) \leq {\rm rank}(A)$ and
${\rm rank}(AB) \leq {\rm rank}(B)$
as required.
\end{proof}

We shall see below that a number of natural notions of rank (over the tropical semiring, or 
sometimes over more general semirings) have been shown to satisfy the rank 
product inequality. It follows that all of these respect the 
$\GreenJ$-order, and hence in particular are $\GreenJ$-class invariants. 

\begin{example} (Factor Rank.)\\

Let $S$ be a commutative
semiring with addition denoted by $\oplus$ and multiplication by juxtaposition.
Let $A$ be a non-zero $m \times n$ matrix over $S$. Recall that the 
\emph{factor rank} of $A$ is the smallest natural number $k$ such that $A$ 
can be written as a product $A = BC$ with $B$ an $m \times k$ matrix and 
$C$ a $k \times n$ matrix. Equivalently, the factor rank is the smallest 
natural number $k$ such that $A$ can be written as
$$A = c_1 r_1 \oplus \cdots \oplus c_k r_k,$$
for some column $m$-vectors $c_1, \ldots, c_k$ 
and row $n$-vectors $r_1, \ldots, r_k$. Put another way, the factor rank 
of $A$ is the smallest cardinality of a set of $n$-vectors whose 
$S$-linear span contains the rows of $A$. By convention, the zero matrix
has factor rank $0$, and it is clear that no other matrix has factor rank 
$0$.
\end{example}

In the case where $S$ is a field, factor rank coincides with the usual 
definition of rank. Factor rank has been widely studied over the Boolean 
semiring (where it is called \emph{Schein rank} \cite{Kim82B}) and the 
tropical semiring (where it is sometimes called \emph{Barvinok rank} 
\cite{Develin05}). It has been observed in various semirings (see for 
example \cite[Proposition 4.4]{Beasley05}) that factor rank satisfies the 
rank-product inequality, and hence in our terminology respects the 
$\GreenJ$-order. However, this fact does not appear to have been stated
for commutative semirings in full generality; for this reason we include
a very brief proof.

\begin{corollary}
Let $S$ be a commutative semiring and $n \in \mathbb{N}$. Then factor
rank respects the $\GreenJ$-order, and hence is a 
$\GreenJ$-class invariant, in $M_n(S)$.
\end{corollary}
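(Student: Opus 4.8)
The plan is to invoke Proposition~\ref{rankinequality}, which reduces the claim to verifying that factor rank satisfies the rank-product inequality
$$\mathrm{rank}(AB) \leq \min(\mathrm{rank}(A), \mathrm{rank}(B))$$
for all $A, B \in M_n(S)$. Indeed, once this inequality is established for factor rank, the proposition immediately yields that factor rank respects the $\GreenJ$-order, and any function respecting the $\GreenJ$-order is \emph{a fortiori} a $\GreenJ$-class invariant. So the entire content of the corollary lies in checking one inequality.

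To prove that inequality, I would argue directly from the factorisation definition of factor rank. Suppose $A$ has factor rank $r$ and $B$ has factor rank $s$, and assume for the moment that both are non-zero. Then I may write $A = EF$ with $E$ an $n \times r$ matrix and $F$ an $r \times n$ matrix, and $B = GH$ with $G$ an $n \times s$ matrix and $H$ an $s \times n$ matrix. Since matrix multiplication over a commutative semiring is associative, grouping as $AB = E(FGH)$ exhibits $AB$ as a product of an $n \times r$ matrix with an $r \times n$ matrix, whence the factor rank of $AB$ is at most $r = \mathrm{rank}(A)$. Grouping dually as $AB = (EFG)H$ shows the factor rank of $AB$ is at most $s = \mathrm{rank}(B)$. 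Combining, $\mathrm{rank}(AB) \leq \min(r, s)$, as required.

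I expect no genuine obstacle here; the argument is routine, and the only points deserving a moment's care are the associativity of matrix multiplication over an arbitrary commutative semiring (which is standard) and the degenerate case in which $A$ or $B$ is the zero matrix. In that case $AB$ is itself the zero matrix, which has factor rank $0$ by convention, so the inequality $\mathrm{rank}(AB) = 0 \leq \min(\mathrm{rank}(A), \mathrm{rank}(B))$ holds trivially; hence the factorisation argument above needs only be applied to the non-zero matrices. I would therefore present this as a short proof: first reduce to the rank-product inequality via Proposition~\ref{rankinequality}, then dispatch the zero case by the convention and the general case by the two associativity regroupings.
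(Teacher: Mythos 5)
Your proof is correct, but it takes a different route from the paper's. You reduce the corollary to the rank-product inequality and invoke Proposition~\ref{rankprop}, proving the inequality by taking minimal factorisations $A = EF$ and $B = GH$ and regrouping $AB = E(FGH) = (EFG)H$ by associativity. The paper instead argues directly from the definition of the $\GreenJ$-order: given $A \OrdJ B$ it writes $A = PBQ$ with $P, Q \in M_n(S)^1$, expresses $B$ as a minimal sum of outer products $B = c_1 r_1 \oplus \cdots \oplus c_k r_k$, and uses distributivity to obtain $A = (Pc_1)(r_1Q) \oplus \cdots \oplus (Pc_k)(r_kQ)$, so that only the factorisation of $B$ is needed and Proposition~\ref{rankprop} is bypassed entirely. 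The two arguments share the same mathematical core (regrouping a minimal factorisation), and indeed the paper explicitly acknowledges your route: it notes that the rank-product inequality for factor rank has been observed over various semirings (e.g.\ by Beasley and Guterman) and includes a direct proof only because the fully general statement for commutative semirings did not appear in the literature. What your version buys is uniformity with the other corollaries of that section (Gondran--Minoux, determinantal and tropical rank are all handled via Proposition~\ref{rankprop}), plus the rank-product inequality itself as a statement of independent interest; what the paper's version buys is brevity (one factorisation instead of two) and transparent handling of the adjoined-identity cases inside the single identity $A = PBQ$. Your treatment of the degenerate case is also sound and slightly different in shape: you observe that $AB$ is zero and has factor rank $0$ when $A$ or $B$ is zero (using that the zero of $S$, when it exists, is multiplicatively absorbing), whereas the paper notes that $B$ zero forces $A = B$; both dispositions are valid, and both, like the paper, implicitly restrict attention to semirings and matrices for which a finite factorisation exists at all.
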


\begin{proof}
Let $A, B \in M_n(S)$ with $A \OrdJ B$. We shall show that the
factor rank of $A$ does not exceed that of $B$.
Note 
that if $S$ has a zero and $B$ is the zero matrix then the result holds trivially, since then 
we must have that $A=B$. Thus we may assume that $B$ is non-zero. Let
$k > 0$ be the factor rank of $B$.
Then $k$ is the smallest natural number such that we may write 
$$B= c_1 r_1 \oplus \cdots \oplus c_k r_k,$$
where each $c_i$ is a 
$n\times 1$ column vector and each $r_i$ is a $1 \times n$ row vector. 
Since $A\leq_{\mathcal{J}} B$ we have $A=PBQ$ for some matrices $P, Q \in 
M_n(S)^1$. Thus, by associativity and distributivity of matrix multiplication,
$$A=(Pc_1) (r_1Q) \oplus \cdots \oplus (Pc_k) (r_kQ),$$
where each $Pc_i$ is a 
$n\times 1$ column vector and each $r_iQ$ is a $1 \times n$ row vector. 
This gives that the factor rank of $A$ is less than or equal to $k$. In 
other words, the factor rank of $A$ is less than or equal to the factor 
rank of $B$.
\end{proof}

\begin{example} (Column and Row Rank.)\\

Let $S$ be a semiring, $n \in \mathbb{N}$ and $A$ be a non-zero matrix
in $M_{n}(S)$. The \emph{column rank} of $A$ is cardinality of the
smallest generating set for the 
column space of $A$. In the case where
$S \in \lbrace \ft, \trop, \olt \rbrace$, the column rank is simply
the generator dimension of the column 
space, as discussed in Section~\ref{sec_embednot}. The \textit{row rank} 
of $A$ is defined dually; it is shown in \cite{Akian09} that the row rank 
and column rank of a tropical matrix can differ. The zero matrix, in the
case that $S$ has a zero element, has column rank and row rank $0$.
\end{example}

Column rank and row rank are closely connected to the notion which was
called \textit{weak independence}, which was first introduced in
\cite[Chapter~16]{CuninghameGreen79}; see \cite{Butkovic10} for survey of these ideas.

In Example~\ref{ex_mutualembednotnec} of Section~\ref{sec_embednot} above we exhibited two 
$\GreenJ$-related matrices in $M_4(\trop)$ whose column spaces have 
different generator dimension. Hence column rank (and, by symmetry, row rank) 
are not $\GreenJ$-class invariants, and do not respect the 
$\GreenJ$-order. However, it follows easily from results of Hollings 
and the second author \cite{K_tropd} (quoted as part of 
Theorem~\ref{greenchar} above) that they are $\GreenD$-class invariants:

\begin{corollary}\label{Dranks}
Let $S \in \{\ft, \trop, \olt\}$, $n \in 
\mathbb{N}$ and $A, B \in M_n(S)$. If $A \mathcal{D} B$ then $A$ and $B$ 
have the same column rank and the same row rank.
\end{corollary}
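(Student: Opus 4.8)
The plan is to invoke Theorem~\ref{greenchar} parts (vi) and (vii), which tell us that $A \RelD B$ forces the column spaces $C_S(A)$ and $C_S(B)$ to be isomorphic as $S$-modules, and likewise forces the row spaces $R_S(A)$ and $R_S(B)$ to be isomorphic. Since column rank is (for $S \in \{\ft, \trop, \olt\}$) precisely the generator dimension of the column space, the entire corollary will reduce to the observation that generator dimension is an isomorphism invariant of finitely generated convex sets.

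First I would establish that $R_S(A)$, $C_S(A)$ and so on are indeed finitely generated, which is immediate since each is by definition the convex hull of the (finitely many) rows or columns of the matrix; hence the notion of generator dimension, and the uniqueness of weak bases up to permutation and scaling recalled in Section~\ref{sec_embednot}, applies to each of them. Next I would recall from the discussion preceding Example~\ref{ex_embednotnec} that generator dimension is well-behaved under linear surjections, and in particular is an isomorphism invariant: if $\phi : X \to Y$ is a linear isomorphism then $\phi$ carries a weak basis of $X$ to a generating set of $Y$ of the same cardinality, and symmetrically $\phi^{-1}$ does the reverse, so the two generator dimensions coincide.

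Putting these together, if $A \RelD B$ then Theorem~\ref{greenchar}(vi) gives a linear isomorphism $C_S(A) \cong C_S(B)$, whence the generator dimensions of the two column spaces agree, which is to say $A$ and $B$ have equal column rank. Dually, Theorem~\ref{greenchar}(vii) gives $R_S(A) \cong R_S(B)$ and hence equal row rank. I would handle the zero matrix (when $S$ has a zero, i.e.\ $S \in \{\trop, \olt\}$) as a trivial edge case: its row and column spaces are the trivial module, with generator dimension $0$, and a $\RelD$-related matrix has an isomorphic column space and so must also be the zero matrix.

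I do not anticipate a serious obstacle here, as all the substantive work has already been done: Theorem~\ref{greenchar} supplies the module isomorphisms and the invariance of generator dimension under isomorphism was already noted in Section~\ref{sec_embednot}. The only point requiring a word of care is the passage from ``generator dimension of the column space'' back to ``column rank'', which relies on the stated identification of these two quantities over $\ft$, $\trop$ and $\olt$; once that identification is invoked, the result is essentially a direct citation of the invariance of generator dimension under $S$-module isomorphism.
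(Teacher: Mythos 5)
Your proof is correct and takes essentially the same route as the paper: both invoke Theorem~\ref{greenchar}(vi) and (vii) to get linear isomorphisms of the column and row spaces, and then conclude via the fact (noted in Section~\ref{sec_embednot}) that generator dimension is an isomorphism invariant. Your explicit handling of the zero-matrix convention is a harmless extra that the paper's one-line proof leaves implicit.
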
 

\begin{proof}
By Theorem~\ref{greenchar}(vi) and (vii), $A$ and $B$ have isomorphic 
column spaces and isomorphic row spaces, and it is immediate from the
definitions that isomorphic spaces have the same generator dimension. Thus,
$A$ and $B$ have the same column rank and the same row rank.
\end{proof}

An interesting observation is the following. If $X$ is 
a finitely generated tropical convex set (say of row vectors), then by 
Theorem~\ref{greenchar} and Corollary~\ref{Dranks}, there exists a
$k \in \mathbb{N}$ which is the column rank of \textit{every} matrix whose row 
space is $X$. In other words, $X$, in addition to its own generator dimension
as a space of row vectors, 
admits an invariant which one might call its \textit{dual dimension}. One 
might ask if this dimension manifests itself in a ``coordinate-free'' 
manner in the space $X$ itself. In fact, over the semirings $\ft$ or $\olt$,
$X$ is \textit{anti-isomorphic} to the column space of any matrix of which it
is the row space (see \cite{K_tropd}), so this dual dimension is
exactly the minimum cardinality of a generating set for $X$ under the
operations of scaling and 
\textit{greatest lower bound} within $X$. (Note that greatest lower bound
within $X$ does \textit{not} necessarily coincide with componentwise minimum, since
$X$ need not be closed under the latter operation.)

While column rank and row rank are not $\GreenJ$-class invariants in 
general (as shown for example in Example~\ref{ex_mutualembednotnec} above),
we know that $M_2(\trop)$ (by \cite{K_tropicalgreen}) and 
$M_n(\ft)$ for all $n$ (by Theorem~\ref{finiteDJ}) satisfy
$\GreenD = \GreenJ$, so in these semigroups column rank and row rank \emph{are} 
$\GreenJ$-class invariants.

\begin{example} (Gondran-Minoux Rank \cite{Gondran84}.)\\

Let $S$ be a commutative semiring with zero element. We say that $x_1, \ldots, x_t$ are 
\emph{Gondran-Minoux independent} if whenever
$$\sum_{i \in I} \alpha_i x_i = \sum_{j \in J} \alpha_j x_j,$$
with $I,J \subset \{1, \ldots, t\}$, $I \cap J = \emptyset$ and
$\alpha_1, \ldots, \alpha_t \in S$ it follows that $\alpha_1, \ldots, \alpha_t$ are all zero.
Now let $A \in M_{n}(S)$. The \emph{maximal Gondran-Minoux column [row] rank} 
of $A$ is the maximal number of Gondran-Minoux independent columns [rows] of
$A$.
\end{example}

Note that this notion of rank is explicitly defined in terms of the actual 
columns of $A$, rather than just the column space, so there
is no immediate reason to suppose that it is a column space invariant (or 
equivalently by Theorem~\ref{greenchar}(iv), an $\GreenR$-class 
invariant). Rather surprisingly, however, recent results of Shitov 
\cite{Shitov10} imply that maximal Gondran-Minoux column rank even
respects the $\GreenJ$-order 
in full matrix semigroups over $\trop$ and $\olt$. Indeed, in
\cite{Shitov10} it is shown that for a class of semirings including
$\trop$ and $\olt$ (specifically, idempotent
\textit{quasi-selective} semirings with zero and without zero divisors),
this notion of rank satisfies the rank-product inequality, so
Proposition~\ref{rankprop} yields:

\begin{corollary}
Let $S \in \lbrace \trop, \olt \rbrace$ and $n \in \mathbb{N}$.
Then maximal Gondran-Minoux column rank and
maximal Gondran-Minoux row rank respect the $\GreenJ$-order, and hence are 
$\GreenJ$-class invariants, in $M_n(S)$.
\end{corollary}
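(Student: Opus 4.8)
The plan is to invoke the already-established machinery of Shitov's work together with Proposition~\ref{rankprop}. The statement to be proved is that, for $S \in \{\trop, \olt\}$, both maximal Gondran-Minoux column rank and maximal Gondran-Minoux row rank respect the $\GreenJ$-order in $M_n(S)$, and hence are $\GreenJ$-class invariants. By Proposition~\ref{rankprop}, a rank function respects the $\GreenJ$-order if and only if it satisfies the rank-product inequality $\mathrm{rank}(AB) \leq \min(\mathrm{rank}(A), \mathrm{rank}(B))$ for all $A, B \in M_n(S)$. Thus the entire task reduces to verifying that maximal Gondran-Minoux column (respectively row) rank satisfies the rank-product inequality over $\trop$ and $\olt$.

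First I would check that $\trop$ and $\olt$ fall within the class of semirings to which Shitov's result applies. As the surrounding text indicates, \cite{Shitov10} establishes the rank-product inequality for maximal Gondran-Minoux rank over idempotent \emph{quasi-selective} semirings with zero and without zero divisors. So the key verification is that both $\trop$ and $\olt$ are idempotent (the $\oplus = \max$ operation is plainly idempotent), quasi-selective, possess a zero element ($-\infty$ serves as the additive zero, and one checks the completed case likewise), and have no zero divisors. The no-zero-divisor condition amounts to checking that $a \otimes b$ (ordinary addition, extended appropriately) equals the zero element only when one of $a, b$ is zero, which holds for $\trop$ and, with the stipulated convention $(-\infty)(+\infty) = -\infty$, also for $\olt$. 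Once membership in Shitov's class is confirmed, his theorem directly yields the rank-product inequality for maximal Gondran-Minoux column rank.

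The row-rank case then follows by a transpose (duality) argument: maximal Gondran-Minoux row rank of $A$ equals the maximal Gondran-Minoux column rank of $A^T$, and since $(AB)^T = B^T A^T$ over a commutative semiring, the rank-product inequality for column rank transfers immediately to row rank. With both inequalities in hand, a single application of Proposition~\ref{rankprop} delivers that each rank respects the $\GreenJ$-order, and respecting the $\GreenJ$-order trivially entails being a $\GreenJ$-class invariant (if $A \RelJ B$ then $A \OrdJ B$ and $B \OrdJ A$, forcing the rank values to be equal).

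Since the substantive mathematical content is carried entirely by the cited theorem of \cite{Shitov10}, there is no serious obstacle in the present argument; it is a routine assembly of earlier results. The only point requiring genuine (if brief) attention is confirming that $\trop$ and $\olt$ satisfy the precise hypotheses of Shitov's theorem—in particular the quasi-selectivity condition and the absence of zero divisors in the completed semiring $\olt$, where the exceptional product rule $(-\infty)(+\infty) = -\infty$ must be accounted for. I expect this verification to be the most delicate step, though still elementary.
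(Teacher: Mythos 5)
Your proposal is correct and takes essentially the same route as the paper, which likewise derives the corollary by citing Shitov's rank-product inequality for idempotent quasi-selective semirings with zero and without zero divisors (a class including $\trop$ and $\olt$) and then applying Proposition~\ref{rankprop}. Your explicit verification of Shitov's hypotheses and the transpose argument for the row-rank case are sensible elaborations of details the paper leaves implicit, not a different method.
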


It is interesting that, although the Gondran-Minoux ranks give 
$\mathcal{J}$-class invariants in both $\trop$ and $\olt$, the rank 
functions themselves are dependent upon which tropical semiring one
works in (whilst the $\mathcal{J}$-classes are not, by Proposition 
\ref{jinherit}). For instance, the matrix
$$A=\left(\begin{array}{c c c c} 0& 0\\ 0&1 \end{array}\right)$$
has maximal Gondran-Minoux column rank $2$ over $\trop$, but $1$ over $\olt$.

\begin{example} (Determinantal rank.)\\

Another equivalent way to define rank in classical linear algebra is 
as the dimension of the largest non-singular submatrix. In a 
semiring, without negation, it is not entirely clear how to define
singularity. However, one reasonable approach is to regard a
matrix as singular if the terms which would 
normally have positive coefficients in the determinant have the same sum 
as the terms as those which would normally have negative coefficients.
\end{example}

More formally, for a $k \times k$ matrix $M$ over a commutative semiring, we define 
\begin{eqnarray*}
|M|^+ &=& \sum_{\substack{\sigma \in S_k,\\ {\rm sign}(\sigma)=1}} m_{1,\sigma(1)} \cdots m_{k, \sigma(k)}\\
|M|^- &=&\sum_{\substack{\sigma \in S_k,\\ {\rm sign}(\sigma)=-1}} 
m_{1,\sigma(1)} \cdots m_{k, \sigma(k)},
\end{eqnarray*}
where $S_k$ 
denotes the symmetric group on $\{1, \ldots, k\}$. The \emph{determinantal 
rank} of $A$ is the largest integer $k$ such 
that $A$ has a $k \times k$ minor $M$ with $|M|^+ \neq |M|^-$. In 
\cite[Theorem 9.4]{Akian09} it was shown that determinantal rank over
$\trop$ (and hence also over $\ft$) satisfies the rank-product inequality,
which combined with Proposition~\ref{rankprop} yields:

\begin{corollary}
Let $S \in \lbrace \ft, \trop \rbrace$ and $n \in \mathbb{N}$. Then
determinantal rank respects the $\GreenJ$-order, and hence is a
$\GreenJ$-class invariant, in $M_n(S)$.
\end{corollary}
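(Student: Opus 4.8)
The plan is to deduce the result directly from Proposition~\ref{rankprop}, which reduces the claim that determinantal rank respects the $\GreenJ$-order to verifying the rank-product inequality ${\rm rank}(AB) \leq {\rm min}({\rm rank}(A), {\rm rank}(B))$ in each of $M_n(\ft)$ and $M_n(\trop)$. For $S = \trop$ this inequality is exactly the statement cited from \cite{Akian09}, so in that case nothing further is needed beyond invoking Proposition~\ref{rankprop} and then the general observation (made earlier in this section) that any rank function respecting the $\GreenJ$-order is automatically a $\GreenJ$-class invariant.

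For $S = \ft$ the idea is to obtain the inequality from the $\trop$ case by restriction. First I would note that $\ft$ is a subsemiring of $\trop$, so $M_n(\ft)$ is closed under multiplication and any product $AB$ of finite matrices is again finite. Next I would observe that the quantities $|M|^+$ and $|M|^-$, and hence the determinantal rank, are computed using only the operations $\oplus$ and $\otimes$, which coincide on $\ft$ and $\trop$; consequently the determinantal rank of a matrix in $M_n(\ft)$ is unchanged when it is regarded instead as a matrix over $\trop$. Thus the rank-product inequality for $M_n(\trop)$ restricts to the corresponding inequality for $M_n(\ft)$, and Proposition~\ref{rankprop} again applies.

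Since the numerical content of the inequality has already been established in \cite{Akian09}, there is no substantial obstacle; the only point needing care is the inheritance of determinantal rank from $\trop$ to $\ft$, which is immediate once one checks that passing to the subsemiring alters neither the matrix products nor the determinantal expressions involved.
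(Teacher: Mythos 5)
Your proof is correct and takes essentially the same route as the paper: the paper also deduces the corollary by combining Proposition~\ref{rankprop} with the rank-product inequality for determinantal rank established over $\trop$ in \cite[Theorem 9.4]{Akian09}, disposing of the $\ft$ case with the parenthetical ``(and hence also over $\ft$)''. Your explicit verification that matrix products and the expressions $|M|^+$, $|M|^-$ are unchanged on passing to the subsemiring $\ft$ simply fills in the detail the paper leaves implicit.
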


\begin{example} (Tropical Rank)\\

Another natural, and frequently used, notion of singularity for tropical
matrices 
is the following. For $S \in \{\ft, \trop, \olt\}$ we define a matrix
$M \in M_k(S)$ to be \emph{strongly regular} if there is no non-empty 
subset $T \subseteq S_k$ such that
$$\bigoplus_{\sigma \in T} m_{1,\sigma(1)} \cdots m_{k, \sigma(k)} = \bigoplus_{\sigma \in S_k \setminus T} m_{1,\sigma(1)} \cdots m_{k, \sigma(k)}.$$
Now for $A \in M_n(S)$, the 
\emph{tropical rank} of $A$ is the largest integer $k$ such that $A$ has
a strongly regular $k \times k$ minor.
\end{example}

Strongly regular matrices and tropical rank were first studied in
\cite[Chapters 16~and~17]{CuninghameGreen79}, where they are called
just \textit{regular} matrices and
\textit{rank} respectively. A number of equivalent formulations have since 
been discovered (see for example 
\cite{Akian09,Butkovic10,Develin05,Izhakian09}). Perhaps most 
interestingly, over $\ft$ tropical rank is the maximum topological dimension of the 
row (or column) space viewed as a subset of $\mathbb{R}^n$ with the usual 
topology \cite[Theorem~4.1]{Develin05}. In \cite[Theorem 9.4]{Akian09} it 
was shown that tropical rank of matrices over $\trop$ (and hence also over 
$\ft$) satisfies the rank-product inequality. Combining with 
Proposition~\ref{rankprop} we have:

\begin{corollary}
Let $S \in \lbrace \ft, \trop \rbrace$ and $n \in \mathbb{N}$. Then tropical 
rank respects the $\GreenJ$-order, and hence is a $\GreenJ$-class 
invariant, in $M_n(S)$.
\end{corollary}

Finally, we remark briefly that there are a number of other notions of 
rank for tropical matrices (see for example \cite{Akian09}), and we do not 
claim that the study presented in this section is exhaustive. One which 
has proved to be of interest for applications in algebraic geometry is 
\emph{Kapranov rank} (see for example \cite{Develin05}); its relationship 
with Green's relations deserves detailed study.

\section*{Acknowledgements}

This research was supported by EPSRC grant number EP/H000801/1
(\textit{Multiplicative Structure of Tropical Matrix Algebra}).
The second author's research is also supported by an RCUK
Academic Fellowship. The authors thank Peter Butkovic, Zur Izhakian
and Stuart Margolis for helpful conversations.

\bibliographystyle{plain}

\def\cprime{$'$} \def\cprime{$'$}

\end{document}